\def\NZQ{\Bbb}               % the font for N,Z,Q,R,C
\def\NN{{\NZQ N}}
\def\QQ{{\NZQ Q}}
\def\ZZ{{\NZQ Z}}
\def\B'c{{\mathcal{B'}}}
\def\U'c{{\mathcal{U'}}}
\def\opn#1#2{\def#1{\operatorname{#2}}} % to make operators
\opn\chara{char}
\opn\length{\ell}
\opn\projdim{proj\,dim}
\opn\injdim{inj\,dim}
\opn\ini{in}
\opn\rank{rank}
\opn\depth{depth}
\opn\sdepth{sdepth}
\opn\height{ht}
\opn\embdim{emb\,dim}
\opn\codim{codim}
\opn\Tr{Tr}
\opn\bigrank{big\,rank}
\opn\superheight{superheight}\opn\lcm{lcm}
\opn\trdeg{tr\,deg}%
\opn\reg{reg}
\opn\lreg{lreg}
\opn\set{set}
\opn\supp{Supp}
\opn\shad{Shad}
\opn\div{div}
\opn\Div{Div}
\opn\cl{cl}
\opn\Cl{Cl}
\opn\Spec{Spec}
\opn\Supp{Supp}
\opn\supp{supp}
\opn\Sing{Sing}
\opn\Ass{Ass}
\opn\Min{Min}
\opn\Ann{Ann}
\opn\Rad{Rad}
\opn\Soc{Soc}
\opn\Ker{Ker}
\opn\Coker{Coker}
\opn\Im{Im}
\opn\Hom{Hom}
\opn\Tor{Tor}
\opn\Ext{Ext}
\opn\End{End}
\opn\Aut{Aut}
\opn\id{id}
\opn\nat{nat}
\opn\GL{GL}
\opn\SL{SL}
\opn\mod{mod}
\opn\ord{ord}
\opn\aff{aff}
\opn\con{conv}
\opn\relint{relint}
\opn\st{st}
\opn\lk{lk}
\opn\cn{cn}
\opn\core{core}
\opn\vol{vol}
\opn\gr{gr}
\def\pot#1#2{#1[\kern-0.28ex[#2]\kern-0.28ex]}
\opn\dirlim{\underrightarrow{\lim}}
\opn\invlim{\underleftarrow{\lim}}
\def\pnt{{\raise0.5mm\hbox{\large\bf.}}}
\def\Implies{\ifmmode\Longrightarrow \else
     \unskip${}\Longrightarrow{}$\ignorespaces\fi}
\def\implies{\ifmmode\Rightarrow \else
     \unskip${}\Rightarrow{}$\ignorespaces\fi}
\def\iff{\ifmmode\Longleftrightarrow \else
     \unskip${}\Longleftrightarrow{}$\ignorespaces\fi}
\newtheorem{Theorem}{Theorem}[section]
\newtheorem{Lemma}[Theorem]{Lemma}
\newtheorem{Corollary}[Theorem]{Corollary}
\newtheorem{Proposition}[Theorem]{Proposition}
\newtheorem{Remark}[Theorem]{Remark}
\newtheorem{Example}[Theorem]{Example}
\newtheorem{Question}[Theorem]{Question}
\let\epsilon=\varepsilon
\let\phi=\varphi
\let\kappa=\varkappa
\numberwithin{equation}{section}
\title{Stanley depth and complete $k$-partite hypergraphs}
\author[Muhammad Ishaq]{Muhammad Ishaq}
\address{Muhammad Ishaq, Abdus Salam School of Mathematical Sciences, GC
University, Lahore, 68-B New Muslim town Lahore, Pakistan.}
\email{ishaq$\_\,$maths@yahoo.com}
\author[Muhammad Imran Qureshi]{Muhammad Imran Qureshi}
\address{Muhammad Imran Qureshi, Comsats Institute of Information Technology, Vehari campus, Multan road Vehari, Pakistan.}
\email{imranqureshi18@gmail.com}
\thanks{The authors would like to express their gratitude to ASSMS of GC University Lahore for creating a very appropriate atmosphere for research work. This research is partially supported by HEC Pakistan.}
\begin{document}
\maketitle
\begin{abstract}
We give an upper bound for the Stanley depth of the edge ideal of a complete $k$-partite hypergraph and as an application we give an upper bound for the Stanley depth of a monomial ideal in a polynomial ring $S$. We also give a lower and an upper bound for the cyclic module $S/I$ associated to the complete $k$-partite hypergraph.\\\\
Key words: Monomial ideals, Stanley decomposition, Stanley depth.\\
2000 Mathematics Subject Classification: Primary 13C15, Secondary 13P10, 13F20, 05E45, 05C65.
\end{abstract}
\section{Introduction}
Let $K$ be a field, $S = K[x_1,\dots,x_n]$ be the polynomial ring in $n$ variables over a field $K$ and $M$ be a finitely generated $\ZZ^n$-graded $S$-module. Let $u\in M$ be a homogeneous element in $M$ and $Z$ a subset of the set of variables $Z\subset\{x_1,\dots,x_n\}$. We denote by $uK[Z]$ the $K$-subspace of $M$ generated by all elements $uv$ where $v$ is a monomial in $K[Z]$. If $uK[Z]$ is a free $K[Z]$-module, the $\ZZ^n$-graded $K$-space $uK[Z]\subset M$ is called a Stanley space of dimension $|Z|$.
A Stanley decomposition of M is a presentation of the $\ZZ^n$-graded $K$-vector space $M$ as a finite direct sum of Stanley spaces $$\mathcal{D}:M=\bigoplus\limits_{i=1}^{s}u_iK[Z_i].$$
The number
$$\sdepth\mathcal{D}=\min\{|Z_i| : i =1,\dots,s\}$$ is called the Stanley depth of decomposition $\mathcal{D}$ and the number
$$\sdepth M := \max\{\sdepth\mathcal{D} : \mathcal{D} \hbox{ \ is a Stanley decomposition of $M$}\}$$
is called the Stanley depth of M. This is a combinatorial invariant and does not depend on the characteristic of $K$.
The following open conjecture is due to Stanley \cite{RP}:
$$\depth M\leq \sdepth M,$$ for all finitely generated $\ZZ^n$-graded $S$-modules $M$.\\
\indent Let  $\mathbf{H}=(V,E)$ denote a hypergraph with vertex set $V$ and hyperedge set $E$. A hyperedge $e\in E$ is a subset of the vertices. That is, $e\subset V$ for each $e\in E$. A hypergraph is called complete $k$-partite if the vertices are partitioned into $k$ disjoint subsets $V_i$, $i= 1,\dots,k$ and $E$ consists of all hyperedges containing exactly one vertex from each of the $k$ subsets.\\
\indent  In this paper we try to answer the following question asked by B. Nill and K. Vorwerk in \cite{BK}.
\begin{Question}[\cite{BK}]{\em
Let $I$ be the edge ideal of a complete $k$-partite hypergraph $\mathbf{H}_d^k$. Here, $\mathbf{H}_d^k$ has $kd$ vertices divided into $k$ independent sets $V^{(i)}$ (for $i=1,\dots,k$) each with $d$ vertices $v_1^{(i)},\dots,v_d^{(i)}$, and $\mathbf{H}_d^k$ has $d^k$ hyperedges consisting of exactly $k$ vertices. Then $I$ is squarefree monomial ideal in the polynomial ring $K[v_j^{(i)}:i\in \{1,\dots,k\},j\in \{1,\dots,d\}]$:
$$I=(v_1^{(1)},\dots,v_d^{(1)})\cdots (v_1^{(k)},\dots,v_d^{(k)}).$$
What is $\sdepth(S/I)$ in this case?
}
\end{Question}
We consider this question even in more general frame. We consider the case where each vertex set $V^{(i)}$ is not necessarily of the same cardinality. Let $I$ be the edge ideal of a complete $k$-partite hypergraph $\mathbf{H}^k$, where $\mathbf{H}^k$ has $n$ vertices divided into $k$ independent sets $V^{(i)}$ (for $i=1,\dots,k$) each with $d_i$ vertices $v_1^{(i)},\dots,v_{d_i}^{(i)}$, and $\mathbf{H}^k$ has $d_1d_2\cdots d_k$ hyperedges consisting of exactly $k$ vertices. To each vertex set $V^{(i)}$ we associate a set of variables $\{x_{i_1},\dots,x_{i_{d_i}}\}$ and set $S=K[(x_{i_j})]$. Now let $V^{(i)}$ and $V^{(j)}$ be two vertex sets, $\{x_{i_1},\dots,x_{i_{d_i}}\}$ and $\{x_{j_1},\dots,x_{j_{d_j}}\}$ be the sets of variables associated to $V^{(i)}$ and $V^{(j)}$ respectively. Since $V^{(i)}$ and $V^{(j)}$ are independent we have $\{x_{i_1},\dots,x_{i_{d_i}}\}\cap\{x_{j_1},\dots,x_{j_{d_j}}\}=\emptyset$. Then $I$ is the squarefree monomial ideal in the polynomial ring $S$: $$I=P_1P_2\cdots P_k=P_1\cap P_2\cap \dots \cap P_k,$$
where $P_i=(x_{i_1},\dots,x_{i_{d_i}})$ and $\sum\limits_{i=1}^kP_i=\mathfrak{m}=(x_1,\dots,x_n)$. We give a tight upper bound to $\sdepth(I)$ see our Theorem \ref{intersections} and as an application we give an upper bound for the Stanley depth of a monomial ideal (see Theorem \ref{General}). In some cases we are able to give the exact values of $\sdepth(I)$ (see our corollaries \ref{cor3}, \ref{cor5}). We also give a tight lower bound to $\sdepth(S/I)$ see our Theorem \ref{thm}. Our Proposition \ref{prop} gives an upper bound for Stanley depth of $S/I$ but is big in general. \\ We owe thanks to the Referee who inspired us Theorem \ref{th2}.

\section{Edge ideals of a complete $k$-partite hypergraph}
We recall the method of Herzog et al. \cite{HVZ} for computing the Stanley depth of a monomial ideal $I$ using posets. For $c\in \NN^n$, let $x^c$ denote the monomial $x_1^{c(1)}x_2^{c(2)}\cdots x_n^{c(n)}$ and let $I=(x^{a_1},x^{a_2},\dots,x^{a_m})$ be a monomial ideal of $S$. Let $g\in \NN^n$ be the componentwise maximum of $a_i$. Then characteristic poset of $I$ with respect to $g$ (see \cite{HVZ}), denoted by $\mathcal{P}_I^{g}$ is in fact the set $$\mathcal{P}_I^{g}=\{c\in \NN^n \ | \ c\leq g, \text{there is $i$ such that $c\geq a_i$}\},$$ where $\leq$ denotes the partial order in $\NN^n$ which is given by componentwise comparison. For every $a,b\in \mathcal{P}_I^{g}$ with $a\leq b$, define the interval $[a,b]$ to be $\{c\in \mathcal{P}_I^{g}:a\leq c\leq b\}$. Let $\mathcal{P}:\mathcal{P}_I^{g}=\cup_{i=1}^r[c_i,d_i]$ be a partition of $\mathcal{P}_I^{g}$, define $\rho(d_i):=|\{j: d_i(j)=g(j)\}|.$ Define the Stanley depth of a partition $\mathcal{P}$ to be $$\sdepth (\mathcal{P})=\min\limits_{[c_i,d_i]\in \mathcal{P}} \rho(d_i)$$ and the Stanley depth of the poset $\mathcal{P}_I^g$ to be $\sdepth(\mathcal{P}_I^g):=\max\limits_{\mathcal{P}}\,\sdepth(\mathcal{P})$, where maximum is taken over all the partitions $\mathcal{P}$ of $\mathcal{P}_I^g$ into intervals. Herzog et al., showed in \cite{HVZ} that $\sdepth(I)=\sdepth(\mathcal{P}_I^g).$ Next lemma is a small extension of \cite[Lemma 1.1]{C}, its proof is given for the sake of our completeness.
\begin{Lemma}\label{lemma1}
Let $r,m$ and $a$ be positive integers with $r<m$ and $v_1,\dots,v_m \in K[x_2,\dots,x_n]$ be some monomials of $S$. Let $I=(x_1^{a}v_1,\dots,x_1^{a}v_r,v_{r+1},\dots,v_{m})$ and $I'=(x_1^{a+1}v_1,\dots, x_1^{a+1}v_r,v_{r+1},\dots,v_m)$ be monomial ideals of $S$. Then $$\sdepth(I)=\sdepth(I').$$
\end{Lemma}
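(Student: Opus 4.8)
The plan is to work entirely with the characteristic posets and the identity $\sdepth(I)=\sdepth(\mathcal{P}_I^g)$ recalled above. Write $g$ and $g'$ for the componentwise maxima of the generators of $I$ and $I'$. Since each $v_i\in K[x_2,\dots,x_n]$ and $r\ge 1$, the first coordinate of $g$ is $a$, while $g'$ agrees with $g$ in all coordinates except the first, where it equals $a+1$; thus $g'=g+\eb_1$. First I would describe both posets by slicing along the $x_1$-exponent $t$. Let $Q_{>r}\subseteq\NN^{n-1}$ be the set of exponent vectors $q\le(g(2),\dots,g(n))$ with $q\ge\bar v_j$ for some $j>r$ (where $\bar v_j$ is the exponent of $v_j$ in $x_2,\dots,x_n$), and let $Q_{all}$ be defined the same way but allowing all $j\in\{1,\dots,m\}$; note $Q_{>r}\subseteq Q_{all}$, and both are nonempty because $r<m$. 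A direct check of the defining inequalities shows that the height-$t$ slice of $\mathcal{P}_I^g$ is $\{t\}\times Q_{>r}$ for $0\le t\le a-1$ and is $\{a\}\times Q_{all}$ at $t=a$, whereas the slices of $\mathcal{P}_{I'}^{g'}$ are $\{t\}\times Q_{>r}$ for $0\le t\le a$ and $\{a+1\}\times Q_{all}$ at $t=a+1$. So passing from $I$ to $I'$ inserts one extra $Q_{>r}$-layer directly below the top and raises the $Q_{all}$-top by one.

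For $\sdepth(\mathcal{P}_{I'}^{g'})\ge\sdepth(\mathcal{P}_I^g)$ I would start from an optimal interval partition $\mathcal{P}$ of $\mathcal{P}_I^g$ and transform each interval $[c,d]$: leave it unchanged if $d(1)\le a-1$, and replace it by $[c,d+\eb_1]$ if $d(1)=a$. The intervals with $d(1)\le a-1$ live in the common lower layers and keep their $\rho$-value, since their top has first coordinate below both $a$ and $a+1$. For a lifted interval the height-$(a+1)$ part reproduces the old top and the newly created height-$a$ part is its intersection with $Q_{>r}$; because the old tops partition the layer $Q_{all}$, restricting them to $Q_{>r}\subseteq Q_{all}$ partitions the inserted layer, so the transformed intervals tile $\mathcal{P}_{I'}^{g'}$ exactly. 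Here $\rho$ is preserved because a top $d$ with $d(1)=a=g(1)$ is carried to a top $d+\eb_1$ with first coordinate $a+1=g'(1)$, while the remaining coordinates are unchanged and $g,g'$ coincide there.

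The reverse inequality is the main obstacle, since the two posets differ in cardinality (by $|Q_{>r}|$), so no order isomorphism exists and the map cannot be a bijection. Here I would take an optimal partition $\mathcal{P}'$ of $\mathcal{P}_{I'}^{g'}$ and process an interval $[c',d']$ according to $d'(1)$. Intervals with $d'(1)\le a-1$ are kept. For the intervals touching the top ($d'(1)=a+1$) I would push the top down by $\eb_1$, sending $[c',d']$ to $[c',d'-\eb_1]$ when $c'(1)\le a$ and to $[c'-\eb_1,d'-\eb_1]$ when $c'(1)=a+1$; the crucial point is that, since these intervals' top parts partition $Q_{all}$, their shifted height-$a$ parts tile the entire new top layer $\{a\}\times Q_{all}$ of $\mathcal{P}_I^g$. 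For an interval with $d'(1)=a$ (whose top lies in the inserted layer) I would chop off its top, mapping it to $[c',d'-\eb_1]$ when $c'(1)\le a-1$ and simply discarding it when $c'(1)=a$, since in that case it sits entirely inside the height-$a$ layer already covered by the pushed-down top intervals. Matching the restrictions to the lower layers shows these images tile $\mathcal{P}_I^g$, and the same first-coordinate bookkeeping as before (a maxed coordinate $a+1=g'(1)$ going to the maxed coordinate $a=g(1)$, unmaxed staying unmaxed) shows every surviving interval keeps its $\rho$-value. Hence $\sdepth(\mathcal{P}_I^g)\ge\sdepth(\mathcal{P}_{I'}^{g'})$, and combining the two inequalities yields $\sdepth(I)=\sdepth(I')$. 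The only delicate point to verify carefully is the exact tiling at each step, namely that restricting a partition of $Q_{all}$ to the subset $Q_{>r}$ again partitions $Q_{>r}$; this elementary fact is what makes both the insertion and the deletion of the extra layer compatible with the interval structure.
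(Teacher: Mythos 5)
Your second inequality is correct, but your first one has a genuine gap. The transformation ``replace $[c,d]$ by $[c,d+e_1]$ when $d(1)=a$'' produces objects that are not intervals of $\mathcal{P}_{I'}^{g'}$ whenever the old interval lies flat in the top layer with bottom outside $Q_{>r}$, i.e.\ $c(1)=a$ and $\bar c\in Q_{all}\setminus Q_{>r}$: then $c\notin \mathcal{P}_{I'}^{g'}$ at all. Recall that in the Herzog--Vladoiu--Zheng correspondence an interval $[c,d]$ must have \emph{both} endpoints in the poset, and since the characteristic poset is box-convex, every interval is a full box; only interval partitions translate into Stanley decompositions. Your closing remark that ``restricting a partition of $Q_{all}$ to $Q_{>r}$ again partitions $Q_{>r}$'' is true for sets but false for intervals: the trace $[\bar c,\bar d]\cap Q_{>r}$ of a box on an up-set is in general not a box, so your lifted object (that trace at height $a$, together with $[\bar c,\bar d]$ at height $a+1$) has no minimum and is not an interval. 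Such flat intervals are unavoidable: any interval covering a point $(a,\bar v_i)$ with $i\le r$ and $\bar v_i\notin Q_{>r}$ is forced to have its bottom at height $a$ outside $Q_{>r}$. Concretely, take $n=2$, $I=(x_1,x_2)$, $I'=(x_1^2,x_2)$, $a=1$, and the optimal partition $\mathcal{P}_I^{(1,1)}=[(1,0),(1,1)]\cup[(0,1),(0,1)]$: your rule yields $[(1,0),(2,1)]$, but $(1,0)\notin\mathcal{P}_{I'}^{(2,1)}$ and the poset points of that box, $\{(1,1),(2,0),(2,1)\}$, form no interval. The paper's construction repairs exactly this: it raises bottoms as well ($c_i'=c_i+e_1$ when $c_i(1)=a$), and it raises tops already when $d_i(1)\geq a-1$ (not only $=a$), so that intervals topping at height $a-1$ stretch into the inserted $Q_{>r}$-layer and cover the points vacated by the shifted flat intervals; the coverage and disjointness are then verified by the paper's $\alpha-e_1$ case analysis. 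In the example this gives $[(2,0),(2,1)]\cup[(0,1),(1,1)]$, which does tile $\mathcal{P}_{I'}^{(2,1)}$.

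Your reverse direction, by contrast, checks out and is genuinely different from the paper. Because boxes have constant cross-section, deleting the top layer of an interval with $d'(1)=a+1$ and $c'(1)\le a$ leaves the identical cross-section as its new height-$a$ slice; together with the shifted flat top intervals ($c'(1)=d'(1)=a+1$) these reproduce the old tiling of $Q_{all}$ one level down, while trimming the $d'(1)=a$ intervals and discarding the flat ones removes exactly the old height-$a$ tiling of $Q_{>r}$; all new endpoints lie in $\mathcal{P}_I^g$ and your $\rho$-bookkeeping (maxed first coordinate $a+1$ going to maxed $a$, unmaxed staying unmaxed) is right, so $\sdepth(I)\ge\sdepth(I')$ follows. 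The paper gets this half in one line instead, observing that $x_1\notin I'$ and $I':x_1=I$ and citing \cite[Proposition 2]{D4}; your argument is in effect a self-contained proof of that colon trick in this special case, which is a nice by-product, but as submitted the lemma is only half proved.
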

\begin{proof}
Let $\mathcal{P}:\mathcal{P}_I^g=\bigcup_{i=1}^r[c_i,d_i]$ be a partition of $\mathcal{P}_I^g$ such that $\sdepth(\mathcal{P})=\sdepth(I)$, Define $$c_{i}'=\left\{
                                              \begin{array}{ll}
                                                c_i, & \hbox{if $c_i(1)<a$} \\
                                                c_i+e_1, & \hbox{if $c_i(1)=a$}
                                              \end{array}
                                            \right.
\ and \ \ d_i'=\left\{
      \begin{array}{ll}
        d_i, & \hbox{if $d_i(1)<a-1$} \\
        d_i+e_1, & \hbox{if $d_i(1)\geq a-1$}
      \end{array}
    \right.
.$$
Let $g':=(g(1)+1,g(2),\dots,g(n))\in \NN^n.$ Let $\mathcal{P}_{I'}^{g'}$ be the characteristic poset of $I'$ with respect to $g'$. We claim that there exists a partition $\mathcal{P}':\mathcal{P}_{I'}^{g'}=\bigcup_{i=1}^r[c_i',d_i']$ of $\mathcal{P}_{I'}^{g'}$. Note that $\rho(d_i)=\rho(d_i')$ for all $i\in [r].$ Indeed, $d_i'(1)=g'(1)=a+1$ if and only if $d_i(1)=g(1)=a$ and $d_i'(j)=g(j)$ if and only if $d_i(j)=g(j)$, for all $j\geq 2$. Therefore $\sdepth(I)\leq \sdepth(I')$. Now we have to prove our claim. First we show that $\mathcal{P}_{I'}^{g'}=\bigcup_{i}^r[c_i',d_i']$. Let $\alpha \in \mathcal{P}_{I'}^{g'}$. If $\alpha(1)\leq a$ then $\alpha \in \mathcal{P}_{I}^{g}$ that is $\alpha \in [c_i,d_i]$ for some $i$, because if $\alpha \notin \mathcal{P}_{I}^{g}$ then we have $x_1^{a+1}v_k|x^{\alpha}$ for some $k\leq r$ and therefore $\alpha(1)=a+1$, a contradiction.\\
\indent If $\alpha(1)<a$, it follows that $c_i(1)<a$ and therefore $c_i'=c_i.$ We get $c_i'=c_i\leq d_i\leq d_i'$, thus $\alpha \in [c_i',d_i'].$ If $\alpha(1)=a$, it follows that $d_i(1)=a$ and $d_i'(1)=a+1$. Suppose that $\alpha\notin [c_i',d_i'].$ Then $\alpha-e_1\in \mathcal{P}_{I}^{g}$  because $\alpha \in \mathcal{P}_{I'}^{g'}$, $\alpha(1)\neq a+1$ and thus $x_1^{a+1}v_k$ does not divide $x^{\alpha}$ for all $k\leq r$. It follows that $\alpha-e_1\in [c_j,d_j]$ for some $j\neq i$. But $(\alpha-e_1)(1)=a-1$ which implies $d_j(1)\geq a-1$. It follows that $\alpha \leq d_j+e_1=d_j'$ and therefore $\alpha \in [c_j',d_j']$, because $c_j'=c_j$.\\
\indent If $\alpha(1)=a+1$, it follows that $\alpha-e_1\in \mathcal{P}_I^g$ and therefore $\alpha-e_1\in [c_i,d_i]$ for some $i$. Indeed, if $x_1^{a+1}v_k|x^{\alpha}$ for some $k$ then $x_1^{a}v_k|x^{\alpha-e_1}$, else if $v_k|x^{\alpha}$ for some $k\geq r+1$ then $v_k|x^{\alpha-e_1}$. But $(\alpha-e_1)(1)=a$ and therefore $d_i(1)=a$. We get $\alpha\leq d_i+e_1=d_i'$ and thus $\alpha\in [c_i',d_i']$.\\

 Now, we must prove that for any $i\neq j$, we have $[c_i',d_i']\cap[c_j',d_j']=\emptyset$. Assume  that there exists some $\alpha \in [c_i',d_i']\cap[c_j',d_j']$. If $\alpha(1)<a$ then $\alpha \leq d_k$ and $\alpha\geq c_k'=c_k$ for $k=i,j$. Suppose that
  $\alpha(1)\geq a$. Then $d_j'(1)\geq a$. If $c_i(1)<a$ then $\alpha-e_1\geq c_i=c_i'$. If $c_i(1)=a_1$ then $\alpha -e_1\geq c_i'-e_1=c_i$. Hence
  $\alpha-e_1\in [c_i,d_i]$ and
  similarly, $\alpha-e_1\in [c_j,d_j]$, which gives again a contradiction.\\
\indent Now since $x_1\notin I'$ and $I':x_1=I$ then by \cite[Proposition 2]{D4} we have $\sdepth(I)\geq \sdepth(I').$ This completes the proof.
\end{proof}
\begin{Theorem}\label{th2}
Let $I=\bigcap\limits_{i=1}^kQ_i \subset S$ be a monomial ideal such that each $Q_i$ is irreducible and $G(\sqrt{Q_i})\cap G(\sqrt{Q_j})=\emptyset$ for all $i\neq j$, then $\sdepth(I)=\sdepth(\sqrt{I}).$
\end{Theorem}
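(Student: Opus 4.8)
The plan is to deform $\sqrt{I}$ into $I$ through a chain of monomial ideals, changing one variable's exponent at a time, and to show that each step leaves $\sdepth$ unchanged by invoking Lemma \ref{lemma1}. First I would record the structure forced by the hypotheses. Each $Q_i$ is irreducible, so $\sqrt{Q_i}=P_i=(x_{i_1},\dots,x_{i_{d_i}})$ is a monomial prime and $Q_i=(x_{i_1}^{a_{i_1}},\dots,x_{i_{d_i}}^{a_{i_{d_i}}})$ for suitable exponents $a_{i_j}\ge 1$. The disjointness $G(\sqrt{Q_i})\cap G(\sqrt{Q_j})=\emptyset$ says the $Q_i$ involve pairwise disjoint sets of variables, whence intersection equals product: $I=\prod_i Q_i$ and $\sqrt{I}=\prod_i P_i$, with minimal generators obtained by choosing one generator from each factor. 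Thus $G(\sqrt{I})$ consists of the squarefree monomials $\prod_{i=1}^k x_{i_{j_i}}$ (with $1\le j_i\le d_i$) and $G(I)$ of the monomials $\prod_{i=1}^k x_{i_{j_i}}^{a_{i_{j_i}}}$. The crucial structural point is that, because each $Q_i$ is irreducible, every variable $x_{i_j}$ occurs to one fixed exponent $a_{i_j}$ in every generator of $I$ that it divides, independently of the other factors.

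Next I would build the chain. Starting from $\sqrt{I}$, I process the variables one at a time; when processing $x_{i_j}$ I raise its exponent, in every generator divisible by it, from $1$ up to $a_{i_j}$ in single increments. At the moment $x_{i_j}$ is first processed, every generator containing it carries exactly $x_{i_j}^1$ (it has not been touched before) and the complementary cofactors lie in the polynomial ring on the remaining variables; after each increment all these generators again carry a common power of $x_{i_j}$ while the others are free of $x_{i_j}$. Hence each increment is precisely the situation of Lemma \ref{lemma1}, with the generators split into those divisible by the current power of $x_{i_j}$ and those free of it, so $\sdepth$ is preserved at every step. Once all variables are processed the generators have become exactly $G(I)$, giving $\sdepth(\sqrt{I})=\sdepth(I)$.

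The main obstacle is the hypothesis $r<m$ of Lemma \ref{lemma1}, which requires at least one generator free of the variable being raised. This fails exactly when the part $V^{(i)}$ is a singleton, for then its unique variable $x_{i_1}$ divides every generator (and this property is stable throughout the process, since bumping other variables never introduces $x_{i_1}$ into a generator lacking it). I would treat such variables separately: if $x_{i_1}$ divides every generator of the current ideal $J$ to the power $1$, then $J=x_{i_1}J''$ with $x_{i_1}\nmid J''$, and since multiplication by a power of $x_{i_1}$ is a $\ZZ^n$-graded isomorphism up to a shift it preserves $\sdepth$; thus $\sdepth(x_{i_1}J'')=\sdepth(J'')=\sdepth(x_{i_1}^{a_{i_1}}J'')$, raising the exponent to its target in one stroke. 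Applying Lemma \ref{lemma1} for the variables whose part has at least two elements and this shift argument for the singleton parts, the chain connects $\sqrt{I}$ to $I$ with $\sdepth$ constant, which completes the proof.
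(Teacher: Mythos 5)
Your proposal is correct and follows essentially the same route as the paper: both proofs hinge on the structural fact that each variable occurs to a single fixed exponent in all generators of $I$ divisible by it, and both connect $I$ to $\sqrt{I}$ by changing one variable's exponent per step via Lemma \ref{lemma1} (the paper runs the induction downward from $I$, you run it upward from $\sqrt{I}$, which is immaterial since the lemma is an equality). Your separate treatment of singleton parts matches the paper's opening reduction to $\height(Q_i)\geq 2$, where removing a height-one component $Q_k$ amounts to exactly the monomial-shift isomorphism $I\cong\bigcap_{i=1}^{k-1}Q_i$ that you invoke.
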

\begin{proof}
We may suppose that $\height(Q_i)\geq 2$ for all $i$ because if for example $\height(Q_k)=1$, then we may remove $Q_k$ since $I\cong \cap_{i=1}^{k-1}Q_i$. Let $\{v_1,v_2,\dots,v_m\}$ be the set of minimal monomial generators of $I$. If $I$ is squarefree then $I=\sqrt{I}$. In the case that $I$ is not squarefree, we may assume that $x_1^2$ divides some $v_i$. We may further assume that $x_1|v_i$ for $i=1,\ldots,r$ and $x_1$ does not divide $v_i$ for $i>r$. Then, since $G(\sqrt{Q_i})\cap G(\sqrt{Q_j})=\emptyset$ for all $i\neq j$, it follows that there exists an integer $a\geq 1$ such that $x_1^{a+1}|v_i$ for $i=1,\ldots,r$. Now we apply Lemma 2.1 and conclude that $\sdepth(I) =\sdepth(v_1/x_1,...v_r/x_1,v_{r+1},...,v_m)$. Thus an obvious induction argument completes the proof.
%If $I$ is not squarefree then we may assume that $x_1^2|v_1,\dots,x_1^2|v_r$, where $r<m$. By Lemma \ref{lemma1}, $\sdepth(I)=\sdepth((v_1/x_1,\dots,v_r/x_1,v_{r+1},\dots,v_m)).$ Thus, we can replace $I$ with
%$(v_1/x_1,\dots,v_{r}/x_1,v_{r+1},\dots,v_m)$ and then we apply again the previous step if necessary. After a finite number of such  steps we get a square free ideal, that is  $\sqrt{I}$.
\end{proof}
\begin{Remark}{\em
In the setting of Theorem \ref{th2}, if $Q_i$ are not irreducible for all $i$ then the result is false. For example if $n=4$, $I=(x_1^2,x_1x_2,x_2^2)\cap (x_3^2,x_3x_4,x_4^2)$ and $\mathcal P$ is a partition of ${\mathcal P}^g_I$, $g=(2,2,2,2)$ then we must have 9 intervals $[a,b]$ in $\mathcal P$  starting with the generators $a$ of $I$ but only 8 monomials $b$ are in  ${\mathcal P}^g_I$ with $\rho(b)=3$, the biggest  one $x_1^2x_2^2x_3^2x_4^2$ cannot be taken. Thus $\sdepth I<3$. But clearly $\sdepth (\sqrt{I})=3.$
}
\end{Remark}
\begin{Lemma}\label{samedegree}
Let $I$ be a squarefree monomial ideal of $S$ generated by monomials of degree $d$. Let $A$ be the number of monomials of degree $d$ and $B$ be the number of monomials of degree $d+1$ in $I$. Then $$d\leq \sdepth(I)\leq d+\lfloor\frac{B}{A}\rfloor,$$
where $\lfloor a\rfloor$, $a\in \QQ$, denotes the largest integer which is not greater than $a$.
\end{Lemma}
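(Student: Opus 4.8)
The plan is to compute $\sdepth(I)$ through the poset method of Herzog et al.\ recalled above, taking $g=(1,\dots,1)$ (the componentwise maximum of the squarefree degree-$d$ generators, after discarding any variable occurring in no generator). Every element of $\mathcal{P}_I^g$ is then a squarefree monomial divisible by some generator, so it has degree at least $d$; moreover for the top $d_i$ of any interval one has $\rho(d_i)=\deg(x^{d_i})$, the number of its coordinates equal to $1$. The lower bound is immediate from this: in any partition $\mathcal{P}:\mathcal{P}_I^g=\bigcup_i[c_i,d_i]$ we have $\deg(x^{d_i})\ge \deg(x^{c_i})\ge d$, since $c_i\in\mathcal{P}_I^g$ is divisible by a generator. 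Hence $\sdepth(\mathcal{P})\ge d$, and as at least one partition exists (e.g.\ the singletons), $\sdepth(I)=\sdepth(\mathcal{P}_I^g)\ge d$.

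For the upper bound I fix an arbitrary partition $\mathcal{P}:\mathcal{P}_I^g=\bigcup_{i=1}^r[c_i,d_i]$ and set $s=\sdepth(\mathcal{P})$, so that $\deg(x^{d_i})\ge s$ for every $i$. The $A$ monomials of degree $d$ in $I$ are exactly the minimal generators, and these are precisely the minimal elements of $\mathcal{P}_I^g$. Consequently each generator $c$ is forced to equal the minimum $c_i$ of the interval that contains it, because $c_i\le c$ together with $\deg(x^{c_i})\ge d=\deg(x^{c})$ gives $c_i=c$. Thus the $A$ generators appear as the bottoms of $A$ distinct intervals. Inside such a ``generator interval'' $[c_i,d_i]$, with $\deg(x^{c_i})=d$, I count the degree-$(d+1)$ monomials it contains: they are exactly the $b=c_i+e_j$ with $c_i(j)=0$ and $d_i(j)=1$, and their number is precisely $\deg(x^{d_i})-d\ge s-d$.

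Because the intervals of $\mathcal{P}$ are pairwise disjoint, the degree-$(d+1)$ elements produced by the $A$ distinct generator intervals are pairwise distinct; they all lie in $I$ and are squarefree of degree $d+1$, hence are among the $B$ monomials of degree $d+1$ in $I$. This gives $A(s-d)\le B$, i.e.\ $s-d\le B/A$, and since $s-d\in\ZZ$ we conclude $s\le d+\lfloor B/A\rfloor$. As this holds for every partition, $\sdepth(I)=\max_{\mathcal{P}}\sdepth(\mathcal{P})\le d+\lfloor B/A\rfloor$. The step needing the most care is this disjointness-plus-counting argument: one must verify that each of the $A$ generators is genuinely pinned as the minimum of its own interval, and that that interval already contributes at least $s-d$ distinct degree-$(d+1)$ monomials, so that the $A$ intervals jointly account for at least $A(s-d)$ of the $B$ available ones.
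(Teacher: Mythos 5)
Your proposal is correct and follows essentially the same route as the paper: the paper also works in the characteristic poset with an optimal partition, observes that each interval whose bottom is a degree-$d$ generator has top of degree at least $\sdepth(\mathcal{P})$ and hence contains $|d_i|-|c_i|\geq k-d$ elements of degree $d+1$, and uses disjointness of the intervals to count $(k-d)A\leq B$, giving $k\leq d+\lfloor\frac{B}{A}\rfloor$. The only differences are cosmetic: you prove the easy lower bound $d\leq\sdepth(I)$ directly from the poset rather than citing \cite[Lemma 2.1]{KSSY}, and you make explicit (correctly) the step the paper leaves implicit, namely that each of the $A$ generators, being a minimal element of $\mathcal{P}_I^g$, must be the bottom of its own interval.
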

\begin{proof}
We follow the proof of \cite[Theorem 2.8]{MI}(see also \cite{IQ} and \cite{KSSY}). Let $k:=\sdepth({I})$. The poset $\mathcal{P}_I^{g}$ has a partition $\mathcal{P}$ : $\mathcal{P}_I^{g}= \bigcup_{i=1}^{s}$ $[c_i,d_i]$, satisfying
$\sdepth(\mathcal{P})$\\$=k$. For each interval $[c_i,d_i]$ in $\mathcal{P}$ with $|c_i|=d$ we have $|d_i|\geq k$. Also there are $|d_i|-|c_i|$ subsets of cardinality $d+1$ in this interval. Since these intervals are disjoint, counting the number of subsets of cardinality $d$ and $d+1$ we have $(k-d)A\leq B$ that is $k\leq d+\frac{B}{A}$. Hence $\sdepth(I)\leq d+\lfloor\frac{B}{A}\rfloor.$ The other inequality follows by \cite[Lemma 2.1]{KSSY}.
\end{proof}
\begin{Corollary}
Let $I$ be a squarefree monomial ideal of $S$ generated by monomials of degree $d$. If $\binom{n}{d+1}<|G(I)|$ then $\sdepth(I)=d$.
\end{Corollary}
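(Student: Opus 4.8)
The plan is to derive the corollary almost immediately from Lemma~\ref{samedegree}, so the whole argument is a short deduction rather than a fresh construction. Since $I$ is squarefree, the componentwise maximum $g$ of its generators is $(1,\dots,1)$, so the characteristic poset $\mathcal{P}_I^g$ is contained in $\{0,1\}^n$ and the quantities $A$ and $B$ appearing in Lemma~\ref{samedegree} count only \emph{squarefree} monomials of $I$ of degree $d$ and $d+1$ respectively.

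First I would identify $A$ with $|G(I)|$. Because $I$ is generated in the single degree $d$, any monomial of degree $d$ lying in $I$ must be a degree-$d$ multiple of a generator of degree $d$, hence equal to that generator; conversely each generator is such a monomial. Thus the degree-$d$ monomials of $I$ are exactly the elements of $G(I)$, giving $A=|G(I)|$. Next I would bound $B$ from above: since $B$ counts squarefree monomials of degree $d+1$ that lie in $I$, and $S$ contains only $\binom{n}{d+1}$ squarefree monomials of degree $d+1$ in total, we trivially have $B\le\binom{n}{d+1}$.

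Combining these two observations with the hypothesis yields $B\le\binom{n}{d+1}<|G(I)|=A$, so $B<A$ and therefore $\lfloor B/A\rfloor=0$. Lemma~\ref{samedegree} then gives $\sdepth(I)\le d+\lfloor B/A\rfloor=d$, while the lower bound supplied by the same lemma gives $\sdepth(I)\ge d$; together these force $\sdepth(I)=d$.

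There is no genuine obstacle here: the only points requiring care are the two bookkeeping facts, namely that $A=|G(I)|$ (which uses that $I$ is generated in a single degree by squarefree monomials, so no proper divisibility occurs among degree-$d$ monomials of $I$) and that $B\le\binom{n}{d+1}$ (which uses squarefreeness so that $g=(1,\dots,1)$ and only squarefree monomials are counted). Once these are in place, the conclusion is purely a numerical consequence of Lemma~\ref{samedegree}.
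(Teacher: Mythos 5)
Your proposal is correct and is precisely the (implicit) argument the paper intends: the corollary is stated without proof as an immediate consequence of Lemma~\ref{samedegree}, and your deduction --- $A=|G(I)|$ since $I$ is generated in the single degree $d$, $B\le\binom{n}{d+1}<A$ by hypothesis, hence $\lfloor B/A\rfloor=0$ and $d\le\sdepth(I)\le d$ --- fills in exactly the expected bookkeeping. Nothing is missing.
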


\begin{Theorem}\label{intersections}
Let $I=\bigcap\limits_{i=1}^kP_i$ be a monomial ideal in $S$ where each $P_i$ is a monomial prime ideal and $\sum\limits_{i=1}^k P_i=\mathfrak{m}$. Suppose that $G(P_i)\cap G(P_j)=\emptyset$ for all $i\neq j$. Then $$\sdepth(I)\leq \frac{n+k}{2}.$$
\end{Theorem}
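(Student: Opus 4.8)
The plan is to reduce to the squarefree case and then count monomials of consecutive degrees, exactly in the spirit of Lemma \ref{samedegree}. First I would invoke Theorem \ref{th2}: since each $P_i$ is a monomial prime ideal (hence irreducible and radical) and the generating sets $G(P_i)$ are pairwise disjoint, we have $\sqrt{I}=I$ already, so $I$ is squarefree. Writing $d_i=|G(P_i)|$ for the number of variables in $P_i$, the disjointness of the $G(P_i)$ together with $\sum_i P_i=\mathfrak{m}$ forces $\sum_{i=1}^k d_i=n$. Because $I=P_1\cap\dots\cap P_k=P_1\cdots P_k$, its minimal generators are precisely the squarefree monomials that pick exactly one variable out of each block, so $I$ is generated in the single degree $d=k$, and Lemma \ref{samedegree} applies.

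Next I would count the two relevant monomial sets. The generators of $I$ are the products $\prod_{i=1}^k x_{i_{j_i}}$, one factor from each block, so the number of degree-$k$ generators is $A=\prod_{i=1}^k d_i$. A squarefree monomial of degree $k+1$ lying in $I$ must be divisible by at least one generator, i.e. it must hit every block at least once; since it has $k+1$ variables distributed among $k$ blocks, by pigeonhole exactly one block is hit twice and every other block is hit exactly once. Thus the degree-$(k+1)$ monomials in $I$ are counted by choosing which block $i$ contributes two variables ($\binom{d_i}{2}$ ways) and one variable from each of the remaining blocks, giving
$$B=\sum_{i=1}^k \binom{d_i}{2}\prod_{j\neq i} d_j=\frac{A}{2}\sum_{i=1}^k (d_i-1),$$
where I used $\binom{d_i}{2}\prod_{j\neq i}d_j=\frac{d_i-1}{2}\prod_j d_j=\frac{A}{2}(d_i-1)$.

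Now I would apply the upper bound of Lemma \ref{samedegree}, namely $\sdepth(I)\leq d+\lfloor B/A\rfloor\leq k+B/A$. Substituting the counts gives
$$\frac{B}{A}=\frac{1}{2}\sum_{i=1}^k (d_i-1)=\frac{1}{2}\Bigl(\sum_{i=1}^k d_i-k\Bigr)=\frac{n-k}{2},$$
so that
$$\sdepth(I)\leq k+\frac{n-k}{2}=\frac{n+k}{2},$$
which is exactly the claimed bound. The only genuinely substantive point is the pigeonhole counting of $B$: one must be careful to argue that every degree-$(k+1)$ squarefree monomial of $I$ hits each block (so that it is a multiple of some generator) and hits exactly one block twice, and that this description is a bijection with the stated sum. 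Everything else is the bookkeeping identity $\sum d_i=n$ and the direct substitution into Lemma \ref{samedegree}, so the real work is confirming that the enumeration of $A$ and $B$ is correct and that $\lfloor B/A\rfloor\le B/A$ suffices to close the inequality.
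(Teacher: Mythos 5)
Your proof is correct and follows essentially the same route as the paper's: the same counts $A=\prod_{i=1}^k d_i$ and $B=\sum_{i=1}^k\binom{d_i}{2}\prod_{j\neq i}d_j=\frac{A}{2}(n-k)$ are fed into Lemma \ref{samedegree}, with the pigeonhole description of the squarefree degree-$(k+1)$ monomials made explicit (the paper phrases the same count slightly more informally, after a harmless reduction to $d_i\geq 2$). Your appeal to Theorem \ref{th2} is superfluous---an intersection of monomial prime ideals is already squarefree, so $\sqrt{I}=I$ needs no theorem---but this does not affect correctness.
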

\begin{proof}
Note that $I$ is generated by monomials of degree $k$. Let $|G(P_i)|=d_i$ then clearly the number of monomials of degree $k$ in $I$ is $d_1d_2\cdots d_k$. Now let $v\in I$ be a monomial of degree $k+1$, then $v$ is divisible by at least one variable from $G(P_{i})$ for all $1\leq i\leq k$ and one of these sets contains two variables from $\supp(v):=\{x_j:x_j|v\}$. We may suppose that all $d_i\geq 2$ because if for example $d_k=1$, then we may remove $P_k$ because $I\cong \cap_{i=1}^{k-1}P_i$. Fix first $G(P_1)$ such that $\supp(v)$ contains two variables from $G(P_1)$ then the number of such monomials is $\binom{d_1}{2}d_2d_3\cdots d_k$. Now let $\supp(v)$ contains two variables from $G(P_2)$ then the number of such monomials is $\binom{d_2}{2}d_1d_3\cdots d_k$.
Continuing in the same way for all $G(P_{i})$, $3\leq i\leq k$. We get the total number of monomials of degree $k+1$ is $$\sum\limits_{i=1}^k\binom{d_i}{2}d_1\cdots d_{i-1}d_{i+1}\cdots d_k=\frac{d_1d_2\cdots d_k}{2}(\sum\limits_{i=1}^kd_i-k)=\frac{d_1d_2\cdots d_k}{2}(n-k).$$
Now by Lemma \ref{samedegree} we have $$\sdepth(I)\leq k+\frac{1}{d_1d_2\cdots d_k}\cdot\frac{d_1d_2\cdots d_k}{2}(n-k)=k+\frac{n-k}{2}=\frac{n+k}{2}.$$
\end{proof}

Let $I=\bigcap\limits_{i=1}^kQ_i$ be a monomial ideal such that each $Q_i$ is irreducible and $G(\sqrt{Q_i})\cap G(\sqrt{Q_j})=\emptyset$ for all $i\neq j$, $\height(Q_i)=d_i$ and $\sum\limits_{i=1}^k\sqrt{Q_i}=\mathfrak{m}$. We define a set $$A:=\{Q_{i}\,:\,\,\,\height(Q_i)\hbox{ is odd }\}.$$
\begin{Corollary}\label{cor2}
Let $I=\bigcap\limits_{i=1}^kQ_i$ be a monomial ideal such that each $Q_i$ is irreducible and $G(\sqrt{Q_i})\cap G(\sqrt{Q_j})=\emptyset$ for all $i\neq j$, $\height(Q_i)=d_i$ and $\sum\limits_{i=1}^k\sqrt{Q_i}=\mathfrak{m}$. Then $$\frac{n+|A|}{2}\leq\sdepth(I)\leq \lfloor\frac{n+k}{2}\rfloor.$$
\end{Corollary}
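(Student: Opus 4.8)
The plan is to reduce the whole statement to the radical ideal via Theorem \ref{th2}, after which the upper bound is immediate from Theorem \ref{intersections} and the lower bound comes from a product construction. The hypotheses of the corollary are exactly those of Theorem \ref{th2}, so $\sdepth(I)=\sdepth(\sqrt{I})$, where $\sqrt{I}=\bigcap_{i=1}^k\sqrt{Q_i}$. Writing $P_i:=\sqrt{Q_i}$, each $P_i$ is a monomial prime with $|G(P_i)|=\height(Q_i)=d_i$, the sets $G(P_i)$ are pairwise disjoint, and $\sum_iP_i=\mathfrak{m}$; in particular $\sum_id_i=n$. Hence $\sqrt{I}=\bigcap_iP_i=P_1\cdots P_k$ meets the hypotheses of Theorem \ref{intersections}, giving $\sdepth(\sqrt{I})\le\frac{n+k}{2}$. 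Since Stanley depth is an integer, $\sdepth(I)=\sdepth(\sqrt{I})\le\lfloor\frac{n+k}{2}\rfloor$, which is the upper bound.

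For the lower bound I would exploit that $\sqrt{I}=P_1\cdots P_k$ is a product of maximal ideals in pairwise disjoint variables. Put $S_i:=K[G(P_i)]$, so $S=S_1\otimes_K\cdots\otimes_KS_k$ and $P_i$ is the graded maximal ideal of $S_i$. Since the variable sets are disjoint, every monomial of $S$ factors uniquely as $m=m_1\cdots m_k$ with $m_i\in S_i$, and $m\in\sqrt{I}$ precisely when each $m_i$ is non-constant, i.e. $m_i\in P_i$. Thus, as a $\ZZ^n$-graded $K$-vector space, $\sqrt{I}=P_1\otimes_K\cdots\otimes_KP_k$.

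Next I would take, for each $i$, a Stanley decomposition $P_i=\bigoplus_l u_{i,l}K[Z_{i,l}]$ of the maximal ideal $P_i\subset S_i$ with $|Z_{i,l}|\ge\lceil d_i/2\rceil$ for all $l$; such a decomposition exists because it is classical that the maximal ideal of a polynomial ring in $d$ variables has Stanley depth $\lceil d/2\rceil$. Forming all products across the $k$ factors gives
$$\sqrt{I}=\bigoplus_{(l_1,\dots,l_k)}(u_{1,l_1}\cdots u_{k,l_k})\,K[Z_{1,l_1}\cup\cdots\cup Z_{k,l_k}],$$
where, because the $Z_{i,l_i}$ lie in disjoint variable sets, each summand is a genuine Stanley space and the direct sum is exact by the tensor factorization above. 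Each summand has dimension $\sum_{i=1}^k|Z_{i,l_i}|\ge\sum_{i=1}^k\lceil d_i/2\rceil$, so this decomposition has Stanley depth at least $\sum_i\lceil d_i/2\rceil=\frac{n}{2}+\frac{|A|}{2}$, the last equality holding because $\lceil d_i/2\rceil-\frac{d_i}{2}=\frac12$ exactly when $d_i$ is odd, i.e. exactly when $Q_i\in A$. Therefore $\sdepth(I)=\sdepth(\sqrt{I})\ge\frac{n+|A|}{2}$.

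The substantive part is the lower bound, and within it the one point needing care is the claim that Stanley decompositions multiply across disjoint variable sets; everything else is bookkeeping. I would check explicitly that $(u_{1,l_1}\cdots u_{k,l_k})K[\bigcup_iZ_{i,l_i}]$ is free over $K[\bigcup_iZ_{i,l_i}]$ and that these spaces are pairwise disjoint and exhaust $\sqrt{I}$, all of which follow from the unique factorization of a monomial into its part-components. The whole lower bound then rests on the single-prime value $\sdepth(\mathfrak{m}_d)=\lceil d/2\rceil$ summed additively over the $k$ parts, while the upper bound is a direct corollary of Theorems \ref{th2} and \ref{intersections}.
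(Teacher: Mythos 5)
Your proof is correct, and your upper bound is exactly the paper's: Theorem \ref{th2} reduces $\sdepth(I)$ to $\sdepth(\sqrt{I})$, Theorem \ref{intersections} gives $\sdepth(\sqrt{I})\leq \frac{n+k}{2}$, and integrality of the Stanley depth yields the floor. Where you genuinely diverge is the lower bound. The paper disposes of it in one line by citing \cite[Lemma 1.2]{AD}, which asserts $\sum_{i=1}^k\lceil\frac{d_i}{2}\rceil\leq\sdepth(I)$ directly for the ideal $I$ itself, with no passage to the radical; you instead transfer to $\sqrt{I}=P_1\cdots P_k$ via Theorem \ref{th2} and build an explicit Stanley decomposition by tensoring, across the pairwise disjoint variable sets, decompositions of the maximal ideals $P_i\subset S_i$ with all parts of size at least $\lceil\frac{d_i}{2}\rceil$, as furnished by \cite{B}. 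Your product construction is sound: the unique factorization $m=m_1\cdots m_k$ of a monomial into its components in the $S_i$ shows that each product summand $(u_{1,l_1}\cdots u_{k,l_k})K[Z_{1,l_1}\cup\dots\cup Z_{k,l_k}]$ is a genuine Stanley space and that these spaces are pairwise disjoint and exhaust $\sqrt{I}$; in effect you reprove the squarefree case of \cite[Lemma 1.2]{AD}, whose proof is of the same flavour. What your version buys is self-containment (modulo the single-ideal value $\sdepth(\mathfrak{m}_d)=\lceil\frac{d}{2}\rceil$ from \cite{B}) and an explicit decomposition witnessing the bound; what the paper's citation buys is brevity, plus a lower bound that holds for $I$ without invoking Theorem \ref{th2} at all, so that in the paper only the upper bound depends on the reduction to the radical. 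The final bookkeeping $\sum_{i=1}^k\lceil\frac{d_i}{2}\rceil=\frac{n+|A|}{2}$ is identical in both arguments.
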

\begin{proof}
By \cite[Lemma 1.2]{AD} we have $\lceil\frac{d_1}{2}\rceil+\lceil\frac{d_2}{2}\rceil+\dots+\lceil\frac{d_k}{2}\rceil\leq \sdepth(I)$. Now by Theorem \ref{th2} we have $\sdepth(I)=\sdepth(\sqrt{I})$ and by Theorem \ref{intersections} the required result follows.
\end{proof}
\begin{Remark}\label{r1}
{\em With the hypothesis from the above Corollary, $n$ is odd if and only if $|A|$ is odd, thus $n+|A|$ is always even.
}
\end{Remark}
\begin{Corollary}\label{cor3}
Let $I=\bigcap\limits_{i=1}^kQ_i$ be a monomial ideal such that each $Q_i$ is irreducible and $G(\sqrt{Q_i})\cap G(\sqrt{Q_j})=\emptyset$ for all $i\neq j$ and $\sum\limits_{i=1}^k\sqrt{Q_i}=\mathfrak{m}$. Suppose that $|A|=k$, then
$$\sdepth(I)=\frac{n+k}{2}.$$
\end{Corollary}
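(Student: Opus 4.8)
Looking at Corollary \ref{cor3}, I need to prove that under the stated hypotheses, with $|A|=k$ (meaning all heights $d_i$ are odd), the Stanley depth equals $(n+k)/2$ exactly.

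Let me think about what's available. Corollary \ref{cor2} gives the sandwich $\frac{n+|A|}{2} \leq \sdepth(I) \leq \lfloor\frac{n+k}{2}\rfloor$. With $|A|=k$, the lower bound becomes $\frac{n+k}{2}$. So I need the upper bound to collapse to the same value.

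When all $d_i$ are odd, their sum $n = \sum d_i$ is a sum of $k$ odd numbers, so $n \equiv k \pmod 2$, meaning $n+k$ is even. Therefore $\lfloor\frac{n+k}{2}\rfloor = \frac{n+k}{2}$. The two bounds coincide.

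The plan is essentially to invoke Corollary \ref{cor2} and observe that the hypothesis $|A|=k$ forces both bounds to agree.

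\begin{proof}
The plan is to apply Corollary \ref{cor2} and observe that the hypothesis $|A|=k$ forces the lower and upper bounds to coincide. Since $|A|=k$, every height $d_i=\height(Q_i)$ is odd, and the lower bound from Corollary \ref{cor2} reads $\frac{n+|A|}{2}=\frac{n+k}{2}$.

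Next I would address the upper bound $\lfloor\frac{n+k}{2}\rfloor$ from Corollary \ref{cor2} and show it equals $\frac{n+k}{2}$ under the present hypothesis. The key observation is a parity argument: since $\sum_{i=1}^k\sqrt{Q_i}=\mathfrak{m}$ and $G(\sqrt{Q_i})\cap G(\sqrt{Q_j})=\emptyset$ for $i\neq j$, the generators of the $\sqrt{Q_i}$ partition the variables, so $n=\sum_{i=1}^k d_i$. As each $d_i$ is odd, $n$ is a sum of $k$ odd integers, whence $n\equiv k\pmod 2$ and $n+k$ is even. Therefore $\lfloor\frac{n+k}{2}\rfloor=\frac{n+k}{2}$, which is exactly the content of Remark \ref{r1}.

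Combining the two displays, Corollary \ref{cor2} then gives
$$\frac{n+k}{2}\leq\sdepth(I)\leq\frac{n+k}{2},$$
forcing $\sdepth(I)=\frac{n+k}{2}$, as claimed. There is no real obstacle here; the entire argument rests on the parity fact recorded in Remark \ref{r1}, which guarantees the floor in the upper bound of Corollary \ref{cor2} can be dropped precisely when $|A|=k$.
\end{proof}
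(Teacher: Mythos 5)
Your proof is correct and takes exactly the same route as the paper, which simply cites Corollary \ref{cor2} together with the parity observation of Remark \ref{r1}. You have merely spelled out the details the paper leaves implicit: with $|A|=k$ all $d_i$ are odd, so $n\equiv k \pmod 2$, the floor disappears from the upper bound, and the two bounds of Corollary \ref{cor2} coincide at $\frac{n+k}{2}$.
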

\begin{proof}
The proof follows by Corollary \ref{cor2} and Remark \ref{r1}.
\end{proof}
\begin{Corollary}\label{cor5}
Let $I=\bigcap\limits_{i=1}^kQ_i$ be a monomial ideal such that each $Q_i$ is irreducible and $G(\sqrt{Q_i})\cap G(\sqrt{Q_j})=\emptyset$ for all $i\neq j$ and $\sum\limits_{i=1}^k\sqrt{Q_i}=\mathfrak{m}$. Suppose that $k$ is odd and $|A|=k-1$ then, $$\sdepth(I)=\frac{n+k-1}{2}\,.$$
\end{Corollary}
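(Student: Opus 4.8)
The plan is to show that under the stated hypotheses the lower and upper bounds furnished by Corollary~\ref{cor2} coincide, exactly as in the proof of Corollary~\ref{cor3}. Since all the arithmetic work is already packaged in Corollary~\ref{cor2} and Remark~\ref{r1}, the argument reduces to a short parity computation.

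First I would invoke Corollary~\ref{cor2}, which under the present hypotheses (each $Q_i$ irreducible, $G(\sqrt{Q_i})\cap G(\sqrt{Q_j})=\emptyset$ for $i\neq j$, and $\sum_{i=1}^k\sqrt{Q_i}=\mathfrak{m}$) gives
$$\frac{n+|A|}{2}\leq \sdepth(I)\leq \left\lfloor\frac{n+k}{2}\right\rfloor.$$
Substituting the hypothesis $|A|=k-1$ into the left-hand side immediately yields the lower bound $\sdepth(I)\geq \frac{n+k-1}{2}$.

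The only remaining point is to verify that the upper bound $\lfloor (n+k)/2\rfloor$ equals the same value $\frac{n+k-1}{2}$, and for this I would use Remark~\ref{r1}, which asserts that $n$ is odd if and only if $|A|$ is odd. Since $k$ is assumed odd, the quantity $|A|=k-1$ is even, and hence by Remark~\ref{r1} the integer $n$ is even. Consequently $n+k$ is odd, so $\lfloor (n+k)/2\rfloor=\frac{n+k-1}{2}$.

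Combining the two steps, the lower bound $\frac{n+k-1}{2}$ and the upper bound $\frac{n+k-1}{2}$ agree, forcing $\sdepth(I)=\frac{n+k-1}{2}$, as claimed. I do not anticipate any genuine obstacle here: the substantive content lives entirely in Theorem~\ref{intersections}, Theorem~\ref{th2}, and the lower bound from \cite[Lemma~1.2]{AD} that together produce Corollary~\ref{cor2}, and the present statement is simply the specialization of that corollary to the parity regime $k$ odd, $|A|=k-1$ in which the floor closes the gap. The subtlest aspect—the hidden parity constraint linking $n$, $k$, and $|A|$—is precisely what Remark~\ref{r1} records, so the verification is essentially immediate.
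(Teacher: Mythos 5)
Your proposal is correct and matches the paper's own proof essentially verbatim: the paper likewise deduces from $k$ odd that $|A|=k-1$ is even, hence $n$ is even (the parity link recorded in Remark~\ref{r1}), so $n+k$ is odd and the floor in the upper bound of Corollary~\ref{cor2} closes the gap with the lower bound $\frac{n+|A|}{2}=\frac{n+k-1}{2}$. No differences worth noting.
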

\begin{proof}
Since $k$ is odd then $k-1$ is even and since $|A|=k-1$ is even thus $n$ is even and so $n+k-1$ is even. The result follows by Corollary \ref{cor2}.
\end{proof}

\begin{Remark} {\em The bounds found above for $\sdepth(I)$ are certainly sufficient to show Stanley's Conjecture in this case because $\sdepth(I)\geq k=\depth(I)$, as it is done in \cite[Theorem 1.4]{AD} and \cite[Theorem 3.2]{MI1} in a more general frame. It is a difficult combinatorial question to find the precise value of $\sdepth(I)$. It is hard to find $\sdepth(I)$ for example in the case $k=2$ and $n$ even(see \cite[Corollary 2.10]{MI}). We believe that in this case $\sdepth(I)= (n/2)+1$ but we can prove it only for small $n$. The trouble is that we must construct a partition $\mathcal P$ on ${\mathcal P}_I^g$ such that $\sdepth({\mathcal P})=(n/2)+1$. For example if $n=6$ and $I=(x_1,x_2)\cap
(x_3,\ldots,x_6)$ then we have the partition ${\mathcal P}_I^g=[13,1345]\cup [14,1456]\cup [15,1256]\cup [16,1236]\cup [23,1234]\cup [24, 1245]
\cup [25, 2356]\cup [26,2346]\cup (\cup_C [C,C])$, where $C\subset [6]$ with $|C|= 5,6$, or $C\in \{1235, 1246, 1346, 1356, 2345, 2456\}.$  Thus $\sdepth(I)=4=(n/2)+1$.
}
\end{Remark}

\begin{Corollary}
Let $I$ be the edge ideal of a complete $k$-partite hypergraph $\mathbf{H}_d^k$. Then
$$\sdepth(I)=\frac{n+k}{2}, \,\,\,\,\,\,\,\,\,\,\,\,\,\,\,\,\,\,\,\, if\,\,d\,\,is\,\,odd;$$
$$\frac{n}{2}\leq \sdepth(I)\leq \frac{n+k}{2},\,\,\,\,\,\,if\,\,d\,\,is\,\,even.$$
\end{Corollary}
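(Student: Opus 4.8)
The plan is to recognize this corollary as a direct specialization of the general bounds already established in Corollary \ref{cor2} and Corollary \ref{cor3}. First I would fix notation: for the complete $k$-partite hypergraph $\mathbf{H}_d^k$ we have $n=kd$ vertices, and its edge ideal is $I=\bigcap_{i=1}^k P_i$, where each $P_i=(v_1^{(i)},\dots,v_d^{(i)})$ is a monomial prime ideal with $\height(P_i)=d$. I would then verify that $I$ fits the framework of the two corollaries: each $P_i$ is prime and hence irreducible with $\sqrt{P_i}=P_i$; the generator sets $G(P_i)$ are pairwise disjoint because the vertex sets $V^{(i)}$ are independent, so the associated variable sets are disjoint; and $\sum_{i=1}^k P_i=\mathfrak{m}$ by construction. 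Thus $I$ satisfies the hypotheses with $d_i=\height(P_i)=d$ for every $i$.

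Next I would split into the two parity cases governed by the set $A=\{Q_i:\height(Q_i)\text{ is odd}\}$. When $d$ is odd, every height $\height(P_i)=d$ is odd, so $A$ contains all $k$ ideals and $|A|=k$. Applying Corollary \ref{cor3}, whose hypothesis is precisely $|A|=k$, immediately yields $\sdepth(I)=\frac{n+k}{2}$.

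When $d$ is even, every height is even, so $A=\emptyset$ and $|A|=0$. Substituting this into Corollary \ref{cor2} gives $\frac{n}{2}=\frac{n+|A|}{2}\leq\sdepth(I)\leq\lfloor\frac{n+k}{2}\rfloor\leq\frac{n+k}{2}$, which is the asserted range. One could remark that when $k$ is odd the floor actually sharpens the upper bound to $\frac{n+k-1}{2}$, but the stated bound $\frac{n+k}{2}$ follows regardless.

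Since the entire argument reduces to checking hypotheses and evaluating $|A|$ in each parity case, I do not anticipate any genuine obstacle. The only point deserving a moment's care is confirming that the independence of the vertex sets forces the supports $G(P_i)$ to be pairwise disjoint, as this is exactly the condition that permits invoking Corollaries \ref{cor2} and \ref{cor3}.
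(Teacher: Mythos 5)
Your proposal is correct and matches the paper's (implicit) argument: the corollary is stated there without proof precisely because it is the direct specialization of Corollaries \ref{cor2} and \ref{cor3} to $I=\bigcap_{i=1}^k P_i$ with all heights equal to $d$, giving $|A|=k$ when $d$ is odd and $|A|=0$ when $d$ is even. Your hypothesis checks (disjointness of the $G(P_i)$ from independence of the vertex sets, $\sum P_i=\mathfrak{m}$, $n=kd$) and the parity bookkeeping are exactly what is needed.
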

In the next theorem we give an upper bound for the Stanley depth of any monomial ideal of $S$.
We will need the following lemma.
\begin{Lemma}(\cite[Lemma 4.4]{MI1}) \label{is} Let $I\subset {\hat S} = S[x_{n+1}]$ be a monomial ideal, $x_{n+1}$ being a new variable. If
$I\cap S \not= (0)$, then $\sdepth_S(I\cap S) \geq \sdepth_{\hat S} I'- 1.$
\end{Lemma}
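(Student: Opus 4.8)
The plan is to start from an optimal Stanley decomposition of $I$ as an $\hat S$-module and to manufacture a Stanley decomposition of $I\cap S$ as an $S$-module by intersecting each Stanley space with $S$, losing at most one in the dimension of each surviving piece. Concretely, choose a Stanley decomposition $\mathcal{D}: I=\bigoplus_{i=1}^{s}u_iK[Z_i]$ with $\sdepth(\mathcal{D})=\sdepth_{\hat S}(I)$, where each $Z_i\subset\{x_1,\dots,x_n,x_{n+1}\}$. Since $I=\bigoplus_i u_iK[Z_i]$ is a direct sum of $K$-vector spaces whose summands are spanned by pairwise disjoint sets of monomials, and $I\cap S$ is the $K$-span of those monomials of $I$ not divisible by $x_{n+1}$, one automatically gets $I\cap S=\bigoplus_{i=1}^{s}\bigl(u_iK[Z_i]\cap S\bigr)$ as $\ZZ^n$-graded $K$-vector spaces.

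Next I would compute each summand $u_iK[Z_i]\cap S$ by a short case analysis. If $x_{n+1}\mid u_i$, then every monomial in $u_iK[Z_i]$ is divisible by $x_{n+1}$, so $u_iK[Z_i]\cap S=(0)$ and the piece disappears. If $x_{n+1}\nmid u_i$ and $x_{n+1}\notin Z_i$, then $u_iK[Z_i]\subset S$ already and the piece survives unchanged, a Stanley space of dimension $|Z_i|$ over $S$. Finally, if $x_{n+1}\nmid u_i$ but $x_{n+1}\in Z_i$, then the monomials of $u_iK[Z_i]$ avoiding $x_{n+1}$ are exactly those of the form $u_iv$ with $v\in K[Z_i\setminus\{x_{n+1}\}]$, so $u_iK[Z_i]\cap S=u_iK[Z_i\setminus\{x_{n+1}\}]$, a Stanley space of dimension $|Z_i|-1$ over $S$. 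In every surviving case the piece is a free $K[Z]$-module with $Z\subset\{x_1,\dots,x_n\}$, hence a genuine Stanley space in $S$ of dimension at least $|Z_i|-1\geq\sdepth(\mathcal{D})-1$.

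Discarding the zero summands thus yields a Stanley decomposition of $I\cap S$ over $S$, and the hypothesis $I\cap S\neq(0)$ guarantees that at least one summand survives, so this decomposition is nonempty. Reading off its Stanley depth gives $\sdepth_S(I\cap S)\geq\sdepth(\mathcal{D})-1=\sdepth_{\hat S}(I)-1$, as required. The point requiring the most care is the directness of $I\cap S=\bigoplus_i\bigl(u_iK[Z_i]\cap S\bigr)$ together with the verification that each nonzero intersection is again a Stanley space over the smaller ring $S$; once the monomial bookkeeping in the three cases above is made precise, the bound is immediate and no delicate combinatorial construction of a partition is needed.
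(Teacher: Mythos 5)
Your proof is correct: restricting an optimal Stanley decomposition of $I$ to the monomials of $x_{n+1}$-degree zero, with the three-case analysis according to whether $x_{n+1}$ divides $u_i$ and whether $x_{n+1}\in Z_i$, is exactly the standard argument, and the hypothesis $I\cap S\neq (0)$ enters precisely where you use it, to guarantee the restricted decomposition is nonempty. Note that the paper itself gives no proof of this lemma, quoting it from \cite[Lemma 4.4]{MI1} (where the stray $I'$ in the statement should read $I$, as you correctly assumed), and your restriction argument is the same one used to establish it there.
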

\begin{Theorem}\label{General}
Let $I$ be a monomial ideal and let $\Min(S/I)=\{P_1,\dots,P_s\}$ with $\sum\limits_{i=1}^sP_i=\mathfrak{m}$. Let $d_{i}:=|G(P_i)\backslash G(\sum\limits_{i\neq j}^{s}P_j)|$, and $r:=|\{d_i:d_i\neq 0\}|$. Suppose that $r\geq 1$. Then $$\sdepth(I)\leq (2n+r-\sum_{i=1}^sd_i)/2.$$
\end{Theorem}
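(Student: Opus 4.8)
The plan is to derive the bound from Theorem~\ref{intersections} by deleting, one at a time, the variables shared by two or more of the $P_i$. Call a variable \emph{private} if it belongs to exactly one of the $P_i$ and \emph{shared} otherwise, so that $W_i:=G(P_i)\setminus G(\sum_{j\ne i}P_j)$ is the set of private variables of $P_i$; the $W_i$ are pairwise disjoint with $|W_i|=d_i$, the number of private variables is $p:=\sum_{i=1}^s d_i$, and the number of shared ones is $q:=n-p$. Relabel so that $d_1,\dots,d_r>0$ and $d_{r+1}=\dots=d_s=0$, and put $W=\bigcup_{i=1}^r W_i$ and $T=K[W]$. Since every nonzero $d_i$ is at least $1$ we have $p\ge r$, and the claimed inequality is equivalent to $\sdepth(I)\le q+\frac{p+r}{2}=n-\frac12\sum_{i\,:\,d_i\ne 0}(d_i-1)$. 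The number $\frac{p+r}{2}$ is exactly the value that Theorem~\ref{intersections} assigns to the ideal $\bigcap_{i=1}^r(W_i)\subset T$, whose $r$ prime factors have pairwise disjoint generating sets and satisfy $\sum_{i=1}^r(W_i)=\mathfrak{m}_T$.

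I would first treat the case $r=s$, where every minimal prime carries a private variable. Writing $S=S'[x]$ for a shared variable $x$, Lemma~\ref{is} gives $\sdepth_{S'}(I\cap S')\ge\sdepth_S(I)-1$, i.e.\ $\sdepth_S(I)\le\sdepth_{S'}(I\cap S')+1$ as long as $I\cap S'\ne(0)$. Deleting the $q$ shared variables in succession then yields $\sdepth_S(I)\le\sdepth_T(I\cap T)+q$. At the level of radicals one computes $\sqrt{I\cap T}=\sqrt{I}\cap T=\bigcap_{i=1}^s(P_i\cap T)=\bigcap_{i=1}^r(W_i)$, using that $G(P_i)\cap W=W_i$; when $I$ is squarefree this already identifies $I\cap T$ with $\bigcap_{i=1}^r(W_i)$, and Theorem~\ref{intersections} gives $\sdepth_T(I\cap T)\le\frac{p+r}{2}$, while for non-squarefree $I$ one would first pass to the radical by Theorem~\ref{th2}. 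Combining the two inequalities gives $\sdepth_S(I)\le q+\frac{p+r}{2}=(2n+r-\sum_{i=1}^s d_i)/2$, which is the assertion.

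The hard part will be the case $r<s$, i.e.\ the primes with $d_i=0$. Such a $P_i$ has all-shared support, so deleting every shared variable sends $P_i\cap T$ to $(0)$ and hence forces $I\cap T=(0)$; this both collapses the target intersection $\bigcap_{i=1}^r(W_i)$ and violates the hypothesis $I\cap S'\ne(0)$ needed to run Lemma~\ref{is} to the end. Since a private-free prime increases $n$ while leaving $p$ and $r$ fixed, it can only \emph{weaken} the desired inequality, so I would dispose of these primes by an outer induction on $s-r$: delete the shared variables of a single private-free prime, use the minimality and mutual incomparability of the $P_i$ to check that the contracted ideal again has disjoint-supported minimal primes and that the intermediate Stanley-depth count still dominates the target, and recurse down to the equidimensional case of the previous paragraph. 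Turning this reduction into clean bookkeeping --- and verifying that Theorem~\ref{th2} genuinely applies to $I\cap T$, whose irreducible decomposition might a priori contain embedded primes with overlapping supports --- is where I expect the real effort to go.
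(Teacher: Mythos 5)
Your first two paragraphs are, in substance, the paper's entire proof: the paper sets $A=\bigcup_{i=1}^{r}G(P_i')$ (your $W$) and $B=\{x_1,\dots,x_n\}\setminus A$, applies Lemma~\ref{is} by recurrence on $B$ to obtain $\sdepth_S(\sqrt{I})\leq \sdepth_{S'}\bigl(\bigcap_{i=1}^{r}P_i'\bigr)+|B|$ with $S'=K[A]$, bounds the right-hand side by Theorem~\ref{intersections}, and concludes via $\sdepth(I)\leq\sdepth(\sqrt{I})$. The one divergence is the radical step: the paper does not use Theorem~\ref{th2} at all but quotes \cite[Corollary 2.2]{MI}, which gives $\sdepth(I)\leq\sdepth(\sqrt{I})$ for an \emph{arbitrary} monomial ideal. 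If you perform that reduction first, in $S$, and only then delete variables from the squarefree ideal $\sqrt{I}=\bigcap_{i=1}^{s}P_i$, your worry about whether Theorem~\ref{th2} applies to $I\cap T$ (embedded or overlapping irreducible components) simply evaporates.

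Your third paragraph, however, identifies a real obstruction, and your sketched repair would not close it. If some minimal prime $P_i$ has $d_i=0$, then indeed $\sqrt{I}\cap T=(0)$: every monomial of $\sqrt{I}$ is divisible by a generator of $P_i$, all of which are shared and hence outside $A$. So the identity $\sqrt{I}\cap K[A]=\bigcap_{i=1}^{r}P_i'$ fails and Lemma~\ref{is} cannot be iterated to the end. But your induction step --- delete the (all shared) variables of one private-free prime --- contracts that prime, hence $\sqrt{I}$, to $(0)$, hitting the same wall immediately; and the observation that a private-free prime ``only weakens the inequality'' does not convert into an argument, since dropping $P_i$ from the intersection replaces $I$ by a strictly larger ideal and $\sdepth$ is not monotone in the needed direction. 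What you have actually put your finger on is a gap in the paper itself: its proof runs the recurrence over all of $B$ and asserts that the contraction equals $\bigcap_{i=1}^{r}P_i'$ with no hypothesis excluding $r<s$, which is exactly the failure you predict. For instance, with $P_1=(x_1,x_2,x_3,x_7)$, $P_2=(x_4,x_5,x_6,x_8)$, $P_3=(x_7,x_8)$ in $K[x_1,\dots,x_8]$ one has $r=2<3=s$ and $\sqrt{I}\cap K[x_1,\dots,x_6]=(0)$, so the recurrence breaks before reaching $S'$ (the paper's own example after the theorem has $d_i\neq 0$ for all $i$ and so never tests this case). In short: for $r=s$ your proof is correct and coincides with the paper's, modulo the simpler radical reduction noted above; for $r<s$ it is incomplete --- but so is the published proof, for precisely the reason you anticipated.
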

\begin{proof}
We may assume that $P_{i}'$'s be prime ideals such that $G(P_{i}')=G(P_{i})\backslash G(\sum\limits_{i\neq j}^{s}P_j)$ and $\height(P_{i}')\neq 0$ for all $1\leq i\leq r\leq s$. We have $G(P_{i}')\cap G(P_{j}')=\emptyset$ for all $i\neq j$. Let $A:=\bigcup_{i=1}^rG(P_{i}')$, $S'=K[A]$ and $B:=\{x_1,\dots,x_n\}\backslash \bigcup_{i=1}^rG(P_{i}')$, then $|B|=n-\sum_{i=1}^rd_{i}$. Now applying Lemma \ref{is} by recurrence on the set $B$ we have $$\sdepth_S(\sqrt{I})\leq \sdepth_{S'}\Big(\bigcap\limits_{i=1}^rP_{i}'\Big)+|B|.$$ By Theorem \ref{intersections} we have $\sdepth_{S'}\Big(\bigcap\limits_{i=1}^rP_{i}'\Big)\leq \frac{|A|+r}{2}$. But $|A|=n-|B|$ thus we have $$\sdepth(\sqrt{I})\leq \frac{n-|B|+r}{2}+|B|=\frac{2n+r-\sum_{i=1}^sd_i}{2}.$$ By  \cite[Corollary 2.2]{MI} $\sdepth(I)\leq \sdepth(\sqrt{I})$ and  we get $\sdepth(I)\leq (2n+r-\sum_{i=1}^sd_i)/2$.
\end{proof}
\begin{Example}
{\em
Let $I=(x_1\dots,x_9)\cap(x_9,\dots,x_{18})\cap(x_{18},\dots,x_{27})\cap(x_{27},\dots,x_{36})\subset K[x_1,\dots,x_{36}]$. We have $d_1=8$, $d_2=8$, $d_3=8$, $d_4=9$ and $s=4$, then by Theorem \ref{General} we have $\sdepth(I)\leq 23$.
}
\end{Example}
\begin{Remark}
{\em In the above example we have that $\sdepth(I)\leq 23$ but by \cite[Theorem 2.1]{MI1} we have only $\sdepth(I)\leq 31$.
}
\end{Remark}
\bigskip
\section{Cyclic modules associated to a complete $k$-partite hypergraph}
\begin{Theorem}\label{thm}
Let $S=K[x_1,\ldots,x_n]$ be a polynomial ring and $Q_1,Q_2,\ldots,Q_k$ monomial irreducible ideals of $S$ such that $G(\sqrt{Q_i})\cap G(\sqrt{Q_j})=\emptyset$ for all $i\neq j$. Let $r_i:=\height(Q_i)$, $\sum\limits_{i=1}^{k}r_i=n$. If $I=Q_1\cap Q_2\cap\ldots\cap Q_k$, then
\begin{multline*}
\sdepth(S/I)\geq \min\Big\{n-r_1,\min\limits_{2\leq i\leq k}\{\lceil\frac{r_1}{2}\rceil+\ldots+\lceil\frac{r_{i-1}}{2}\rceil+r_{i+1}+\ldots+
r_k\}\Big\},
\end{multline*}
where $\lceil a\rceil$, $a\in \QQ$, denotes the smallest integer which is not less than $a$.
\end{Theorem}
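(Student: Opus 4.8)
The plan is to argue by induction on $k$, peeling off the first ideal $Q_1$ at each step. Relabel the variables so that $\sqrt{Q_1}=(x_1,\dots,x_{r_1})$, and split the polynomial ring as $S=S_1\otimes_K S_2$, where $S_1=K[x_1,\dots,x_{r_1}]$ carries $Q_1$ and $S_2=K[x_{r_1+1},\dots,x_n]$ carries $J:=Q_2\cap\cdots\cap Q_k$; this is legitimate because the supports $G(\sqrt{Q_i})$ are pairwise disjoint and $\sum_i r_i=n$, so the $n$ variables are exactly partitioned among the $Q_i$. Since $Q_2,\dots,Q_k$ involve only the variables of $S_2$, extension to $S$ commutes with intersection, giving $I=Q_1S\cap JS$ with $I\subseteq Q_1S$. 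The engine of the proof is the short exact sequence of $\ZZ^n$-graded modules
$$0\to Q_1S/I\to S/I\to S/Q_1S\to 0,$$
together with Rauf's inequality $\sdepth(S/I)\geq\min\{\sdepth(Q_1S/I),\,\sdepth(S/Q_1S)\}$.

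Next I would identify the two outer terms as tensor products over disjoint variable sets. A monomial count shows $S/Q_1S\cong (S_1/Q_1)\otimes_K S_2$ and $Q_1S/I=Q_1S/(Q_1S\cap JS)\cong Q_1\otimes_K(S_2/J)$. The tool needed is the tensor inequality $\sdepth_S(M_1\otimes_K M_2)\geq\sdepth_{S_1}(M_1)+\sdepth_{S_2}(M_2)$, which I would prove in one line: if $M_1=\bigoplus_j u_jK[Z_j]$ and $M_2=\bigoplus_l w_lK[Y_l]$ are Stanley decompositions (with $Z_j\subseteq\{x_1,\dots,x_{r_1}\}$ and $Y_l$ in the remaining variables), then $\bigoplus_{j,l}(u_jw_l)K[Z_j\cup Y_l]$ is a Stanley decomposition of $M_1\otimes_K M_2$, each space having dimension $|Z_j|+|Y_l|$. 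Applying this, $\sdepth(S/Q_1S)\geq\sdepth_{S_1}(S_1/Q_1)+\sdepth_{S_2}(S_2)\geq 0+(n-r_1)=n-r_1$, which produces the first term of the minimum. For the other term, $\sdepth(Q_1S/I)\geq\sdepth_{S_1}(Q_1)+\sdepth_{S_2}(S_2/J)$; here $\sqrt{Q_1}=\mathfrak{m}_{S_1}$ is the maximal ideal of height $r_1$, so by Theorem \ref{th2} and \cite{AD} applied to this single prime one gets $\sdepth_{S_1}(Q_1)=\sdepth_{S_1}(\mathfrak{m}_{S_1})\geq\lceil r_1/2\rceil$.

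It then remains to run the induction and reassemble the minimum. The base case $k=1$ forces $r_1=n$, so $Q_1$ is $\mathfrak{m}$-primary, $S/Q_1$ is Artinian of Stanley depth $0=n-r_1$, matching the (empty second) minimum. For the inductive step, the hypothesis applied to $J$ in the $n-r_1$ variables of $S_2$ gives a lower bound for $\sdepth_{S_2}(S_2/J)$ whose first term is $(n-r_1)-r_2=r_3+\cdots+r_k$; adding $\lceil r_1/2\rceil$ turns this first term into the $i=2$ summand $\lceil r_1/2\rceil+r_3+\cdots+r_k$ and shifts the remaining summands into the $i=3,\dots,k$ terms of the target expression. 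Combining with the estimate $n-r_1$ from the quotient piece yields exactly
$$\sdepth(S/I)\geq\min\Big\{n-r_1,\ \min_{2\leq i\leq k}\big\{\lceil r_1/2\rceil+\cdots+\lceil r_{i-1}/2\rceil+r_{i+1}+\cdots+r_k\big\}\Big\}.$$

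The routine parts are the monomial bookkeeping and the arithmetic of reassembling the minimum. The main obstacle — the one structural input to pin down carefully — is the correct splitting of $S/I$ across the two disjoint variable blocks, namely verifying that $Q_1S/I\cong Q_1\otimes_K(S_2/J)$ as $\ZZ^n$-graded modules and that the tensor inequality holds, since everything downstream depends on converting the single short exact sequence into the clean additive estimate $\lceil r_1/2\rceil+\sdepth_{S_2}(S_2/J)$.
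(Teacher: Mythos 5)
Your proof is correct, and at bottom it rests on the same filtration as the paper's: if you unwind your induction, the successive short exact sequences split off exactly the quotients $S/Q_1$, $Q_1/(Q_1\cap Q_2)$, $\dots$, $(Q_1\cap\dots\cap Q_{k-1})/I$ that the paper assembles in a single $K$-vector-space direct sum, and your appeal to Rauf's inequality is the one-step form of the principle behind that decomposition. The genuine difference is in how the quotients are bounded. The paper identifies the $i$-th quotient with the intersection ideal $Q_1\cap\dots\cap Q_{i-1}$ inside the polynomial ring without the $Q_i$-variables and then quotes two external results in one stroke: \cite[Lemma 1.2]{AD} for $\lceil\frac{r_1}{2}\rceil+\dots+\lceil\frac{r_{i-1}}{2}\rceil$ and \cite[Lemma 3.6]{HVZ} to add the $r_{i+1}+\dots+r_k$ free variables. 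You instead use the identification $Q_1S/I\cong Q_1\otimes_K(S_2/J)$ together with additivity of sdepth over disjoint variable blocks, so the only nontrivial input you need is the single-ideal bound $\sdepth_{S_1}(Q_1)\geq\lceil\frac{r_1}{2}\rceil$ (the one-ideal case of \cite[Lemma 1.2]{AD}, or Theorem \ref{th2} plus the value $\lceil\frac{r_1}{2}\rceil$ for the maximal ideal from \cite{B}), the sums $\lceil\frac{r_1}{2}\rceil+\dots+\lceil\frac{r_{i-1}}{2}\rceil$ accumulating automatically through the recursion; your arithmetic reassembly (the $i=2$ term from $n'-r_2=r_3+\dots+r_k$, the remaining terms shifted from the induction hypothesis) checks out. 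What your route buys is self-containedness, since you in effect re-derive the needed instance of \cite[Lemma 1.2]{AD}, at the modest cost of an induction; it also has a small technical advantage worth noting: $Q_1S/I\cong Q_1\otimes_K(S_2/J)$ is an exact $\ZZ^n$-graded isomorphism even for non-prime irreducible $Q_i$, whereas the paper's stated identification $(Q_1\cap\dots\cap Q_{i-1}+Q_i)/Q_i\cong Q_1\cap\dots\cap Q_{i-1}\cap K[x_j\mid x_j\notin G(\sqrt{Q_i})]$ is literally an isomorphism only when $Q_i$ is prime (in general an extra finite-dimensional tensor factor of the form $K[x]/(x^{a})$ appears per generator, which is harmless for the sdepth bound but is silently suppressed there); your tensor formulation sidesteps this point entirely.
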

\begin{proof}
 As a $K$-linear space $S/I$ is isomorphic to the direct sum of some multigraded modules as,
\begin{multline*}
S/I\cong S/Q_1 \oplus (Q_1/Q_1\cap Q_2)\oplus (Q_1\cap Q_2/Q_1\cap Q_2\cap Q_3)\oplus\ldots\\\oplus (Q_1\cap\ldots\cap Q_{k-1}/Q_1\cap\ldots\cap Q_k),
\end{multline*}
and we have\\
$\sdepth(S/I)\geq$ $$\min\{\sdepth S/Q_1,\sdepth Q_1/(Q_1\cap Q_2),\ldots,\sdepth(Q_1\cap\ldots\cap Q_{k-1})/(Q_1\cap\ldots\cap Q_k)\}.$$
By \cite[Lemma 1.1]{DQ} $\sdepth(S/Q_1)=n-r_1$. Now since
$$(Q_1\cap\ldots\cap Q_{i-1})/(Q_1\cap\ldots\cap Q_i)\cong(Q_1\cap\ldots\cap Q_{i-1}+Q_i)/Q_i$$
and
$$(Q_1\cap\ldots\cap Q_{i-1}+Q_i)/Q_i\cong Q_1\cap\ldots\cap Q_{i-1}\cap K[x_j \mid x_j\not\in G(\sqrt{Q_i})]$$
Now by using \cite[Lemma 1.2]{AD} and \cite[Lemma 3.6]{HVZ} we have
$$\sdepth((Q_1\cap\ldots\cap Q_{i-1})/(Q_1\cap\ldots\cap Q_i))\geq \lceil\frac{r_1}{2}\rceil+\ldots+\lceil\frac{r_{i-1}}{2}\rceil+r_{i+1}+\ldots+r_k.$$
Thus $$\min\{\sdepth S/Q_1,\sdepth Q_1/(Q_1\cap Q_2),\ldots,\sdepth(Q_1\cap\ldots\cap Q_{k-1})/(Q_1\cap\ldots\cap Q_k)\}$$
\[\geq \min\Big\{n-r_1,\min\limits_{2\leq i\leq k}\{\lceil\frac{r_1}{2}\rceil+\ldots+\lceil\frac{r_{i-1}}{2}\rceil+r_{i+1}+\dots+r_k\}\Big\}.\]
\end{proof}
\begin{Remark}{\em The  theorem says in particular that Stanley's conjecture holds for $S/I$ in the above settings, because $\sdepth(I)\geq k-1=\depth(S/I)$ as it was noticed in \cite[Theorem 3.2]{MI1}. The found lower bound depends on the numbering of $(r_i)$ as shows the following example.
}
\end{Remark}
\begin{Example}{\em
Let $I=P_1\cap P_2\cap P_3\cap P_4\subset K[x_1,\dots,x_{14}]$ where $G(P_i)\cap G(P_j)\neq \emptyset$ for all $i\neq j$, $P_1+P_2+P_3+P_4=(x_1,\dots,x_{14})$ and $\height(P_1)=5$, $\height(P_2)=4$, $\height(P_3)=3$, $\height(P_4)=2$. Then by above theorem we have $\sdepth(S/I)\geq \min\{9,\min\{8,7,7\}\}=7$. Now let us reorder these primes or equivalently consider that $\height(P_1)=4$, $\height(P_2)=5$, $\height(P_3)=2$, $\height(P_4)=3$. Then again by the above theorem we have $\sdepth(S/I)\geq \min\{10,\min\{7,8,6\}\}=6$.
}
\end{Example}
\begin{Corollary}\label{clb}
Let $r_1\geq r_2=\ldots=r_k$, then
$$\sdepth(S/I)\geq \lceil\frac{r_1}{2}\rceil+\ldots+\lceil\frac{r_{k-1}}{2}\rceil.$$
\end{Corollary}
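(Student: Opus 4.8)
The plan is to specialize Theorem~\ref{thm} to the hypothesis $r_1\geq r_2=\ldots=r_k$ and show that the minimum appearing in its lower bound is achieved by the $i=k$ term, which equals $\lceil\frac{r_1}{2}\rceil+\ldots+\lceil\frac{r_{k-1}}{2}\rceil$ (the sum over $r_{i+1}+\ldots+r_k$ being empty when $i=k$). First I would write out the quantity $f(i):=\lceil\frac{r_1}{2}\rceil+\ldots+\lceil\frac{r_{i-1}}{2}\rceil+r_{i+1}+\ldots+r_k$ for $2\leq i\leq k$ and compare consecutive values. The key observation is that passing from $f(i)$ to $f(i+1)$ replaces the full term $r_i$ (which was part of the tail sum) by the halved term $\lceil\frac{r_i}{2}\rceil$; since $\lceil\frac{r_i}{2}\rceil\leq r_i$ for every positive $r_i$, the sequence $f(2),f(3),\ldots,f(k)$ is nonincreasing, so $\min_{2\leq i\leq k}f(i)=f(k)=\lceil\frac{r_1}{2}\rceil+\ldots+\lceil\frac{r_{k-1}}{2}\rceil$.

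Next I would verify that the remaining term $n-r_1$ in the outer minimum of Theorem~\ref{thm} does not drop the bound below $f(k)$. Here the equal-height hypothesis $r_2=\ldots=r_k=:t$ is used: we have $n=r_1+(k-1)t$, so $n-r_1=(k-1)t=r_2+\ldots+r_k$, which is at least $f(k)=\lceil\frac{r_1}{2}\rceil+(k-2)\lceil\frac{t}{2}\rceil$ because each full term $t$ dominates its halved counterpart $\lceil\frac{t}{2}\rceil$ and the single term $\lceil\frac{r_1}{2}\rceil\leq r_1\leq$ one of the remaining $t$'s is comfortably absorbed. Thus $n-r_1\geq f(k)$ and the outer minimum is again $f(k)$.

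Combining these two comparisons, the right-hand side of Theorem~\ref{thm} equals $\lceil\frac{r_1}{2}\rceil+\ldots+\lceil\frac{r_{k-1}}{2}\rceil$, giving the claimed inequality. I expect the monotonicity step $f(i)\geq f(i+1)$ to be the conceptual heart, but it is entirely elementary once one notices that the only change between consecutive terms is the replacement of an integer $r_i$ by its ceiling-half; the slightly more delicate point is confirming that $n-r_1$ does not become the binding constraint, which is where the assumption $r_2=\ldots=r_k$ (rather than an arbitrary tail) really enters. No genuine obstacle is anticipated, since the corollary is a direct arithmetic consequence of the theorem already established.
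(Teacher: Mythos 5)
Your overall route---specializing Theorem~\ref{thm} and showing the minimum there is the $i=k$ term---is exactly what the paper intends, since Corollary~\ref{clb} is stated with no proof at all as an immediate consequence of the theorem. But both of your steps need correction. In step 1, the bookkeeping is off: in $f(i)$ the index $i$ is skipped entirely (the head runs to $i-1$, the tail starts at $i+1$), so the true difference is $f(i+1)-f(i)=\lceil\frac{r_i}{2}\rceil-r_{i+1}$, not the replacement of $r_i$ by $\lceil\frac{r_i}{2}\rceil$. Your claimed reason, that $\lceil\frac{r_i}{2}\rceil\leq r_i$ always, would make $f$ nonincreasing for \emph{arbitrary} heights, which is false (take $r_i=4$, $r_{i+1}=1$: then $f(i+1)-f(i)=2-1>0$). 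The monotonicity does hold here, but only because $r_i=r_{i+1}=t$ for $2\leq i\leq k-1$; the equal-tail hypothesis is already doing real work in step 1, contrary to your remark that it enters only later.

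The genuine gap is step 2. You need $n-r_1=(k-1)t\geq f(k)=\lceil\frac{r_1}{2}\rceil+(k-2)\lceil\frac{t}{2}\rceil$, and your justification asserts ``$\lceil\frac{r_1}{2}\rceil\leq r_1\leq$ one of the remaining $t$'s'' --- but the hypothesis is $r_1\geq t$, the \emph{reverse} inequality. The needed estimate amounts to $\lceil\frac{r_1}{2}\rceil\leq t+(k-2)\lfloor\frac{t}{2}\rfloor$ and fails whenever $r_1$ is large relative to $t$: for $k=2$, $r_1=10$, $r_2=2$, the outer minimum in Theorem~\ref{thm} is $n-r_1=2$, not $\lceil\frac{r_1}{2}\rceil=5$, so $n-r_1$ \emph{is} the binding constraint. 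Moreover no repair is possible, because the corollary's conclusion itself fails there: with $P_1=(x_1,\dots,x_{10})$ and $P_2=(x_{11},x_{12})$, the monomials of $S/I$ lie in $K[x_1,\dots,x_{10}]\cup K[x_{11},x_{12}]$, and the Stanley space $1\cdot K[Z_1]$ of any decomposition must have $Z_1$ inside one of the two variable sets; if $Z_1\subset G(P_2)$ then $|Z_1|\leq 2$, while if $Z_1\subset G(P_1)$ the nonconstant monomials of $K[x_{11},x_{12}]$ must be partitioned inside that ring, forcing an interval of dimension $\leq 1$. Hence $\sdepth(S/I)=2<5$. So Corollary~\ref{clb} as printed is only valid under the extra condition $n-r_1\geq f(k)$ (for instance in the equal-heights case $r_1=\cdots=r_k=d$, which is all that Corollary~\ref{clc} uses); the defect you ran into is inherited from the paper's unproved statement, but your proof as written contains the false step rather than flagging it.
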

\begin{Corollary}\label{clc}
Let $I$ be the edge ideal of a complete $k$-partite hypergraph $\mathbf{H}_d^k$. Then $$\sdepth(S/I)\geq \lceil\frac{d}{2}\rceil(k-1).$$
\end{Corollary}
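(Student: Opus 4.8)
The plan is to recognize Corollary \ref{clc} as the equal-height specialization of the general lower bound established in Corollary \ref{clb}, so that essentially no new work is required. First I would record the structure of the edge ideal $I$ of $\mathbf{H}_d^k$. By the description in the Question, $I = P_1 \cap \dots \cap P_k$ where each $P_i = (v_1^{(i)},\dots,v_d^{(i)})$ is a monomial prime ideal generated by the $d$ variables attached to the $i$-th vertex class. Since the vertex classes are pairwise disjoint, the generating sets satisfy $G(P_i) \cap G(P_j) = \emptyset$ for all $i \neq j$, and $\sum_{i=1}^k P_i = \mathfrak{m}$. In particular every $P_i$ is irreducible (being prime) with $\sqrt{P_i} = P_i$, so the hypotheses of Theorem \ref{thm} are met with $r_i = \height(P_i) = d$ for all $i$ and $n = kd$.

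Next I would apply Corollary \ref{clb}. Here all the heights coincide, so the condition $r_1 \geq r_2 = \dots = r_k$ holds trivially with $r_1 = \dots = r_k = d$. The conclusion of that corollary is the sum of $k-1$ ceiling terms $\lceil r_1/2 \rceil + \dots + \lceil r_{k-1}/2 \rceil$, and since each $r_j = d$ this collapses to $(k-1)\lceil d/2 \rceil$, which is exactly the claimed bound $\lceil d/2\rceil(k-1)$.

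The only point that deserves a check — and what one might regard as the main, if modest, obstacle — is confirming that, in the equal-height case, the first alternative $n - r_1$ appearing in the minimum of Theorem \ref{thm} does not override the ceiling sum. Here $n - r_1 = kd - d = (k-1)d$, while the inner minimum over $2 \leq i \leq k$ is attained at $i = k$ and equals $(k-1)\lceil d/2 \rceil$. Since $d \geq \lceil d/2 \rceil$, we have $(k-1)d \geq (k-1)\lceil d/2 \rceil$, so the overall minimum is $(k-1)\lceil d/2 \rceil$, yielding $\sdepth(S/I) \geq \lceil d/2 \rceil(k-1)$. Apart from this sanity check the proof is pure bookkeeping, because all of the combinatorial content has already been absorbed into Theorem \ref{thm} and its corollaries; no new Stanley decomposition needs to be constructed here.
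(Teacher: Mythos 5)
Your proposal is correct and matches the paper's (implicit) route exactly: the paper states Corollary \ref{clc} without proof as the equal-height case $r_1=\dots=r_k=d$ of Corollary \ref{clb}, which in turn specializes Theorem \ref{thm}. Your explicit check that the term $n-r_1=(k-1)d$ does not override the inner minimum $(k-1)\lceil d/2\rceil$ (attained at $i=k$, since each increment of $i$ changes the sum by $\lceil d/2\rceil - d\leq 0$) is a worthwhile verification that the paper leaves to the reader.
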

\begin{Remark}{\em
In Corollary \ref{clc} if we take $k=2$ then we have $\sdepth(S/I)\geq \lceil\frac{d}{2}\rceil$ but by \cite{DQ} and \cite{BK} we have $\sdepth(S/I)=\lceil\frac{d}{2}\rceil$. This shows that the bound is equal to the actual value in this case. If $d$ is odd in Corollary \ref{clc} then $\sdepth(S/I)\geq \frac{n+k}{2}-\frac{d+1}{2}$. Now assume that $\sdepth(I)\geq \sdepth(S/I)+1$, as A. Rauf asks in \cite{R1}, (for $k=2$ this inequality is true \cite{DQ}). Then by Theorem \ref{intersections} we have $\sdepth(S/I)\leq \frac{n+k}{2}-1$. If this is the case then our lower bound by Corollary \ref{clc} could be a reasonable one, as it is clear that for $d=3$ we have $\frac{n+k}{2}-2\leq \sdepth(S/I)\leq\frac{n+k}{2}-1$.
}
\end{Remark}
\indent Next we give an upper bound for the Stanley depth of $S/I$ with $I=P_1\cap P_2\cap \cdots\cap P_k$, ($P_i$) being monomial prime ideals such that $G(P_i)\cap G(P_j)=\emptyset$. Let $\height(P_i)=r_i$. By \cite[Lemma 3.6]{HVZ} it is enough to consider that $P_1+P_2+\dots+P_k=\mathfrak{m}.$ Let $\mathcal{D}:S/I=\bigoplus\limits_{i=1}^pu_iK[Z_i]$ be a Stanley decomposition. Then $Z_i$ cannot have in the same time variables from all $G(P_i)$, otherwise $u_iK[Z_i]$ will not be a free $K[Z_i]$-module. Suppose $u_1=1$ and $Z_1\subset \{x_{r_1+1},\dots,x_{n}\}$.
\begin{Lemma}\label{ub}
Let $r_2\geq r_3\geq \dots \geq r_{k}\geq 1$. Then $$\sdepth(\mathcal{D})\leq \lceil\frac{r_1}{2}\rceil+r_2+r_3+\cdots +r_{k-1}.$$
\end{Lemma}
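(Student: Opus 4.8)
The plan is to reduce the computation of $\sdepth(\mathcal{D})$ to the Stanley depth of the graded maximal ideal of the polynomial subring $K[G(P_1)]$, for which Lemma \ref{samedegree} already supplies the bound $\lceil r_1/2\rceil$. Write $V=\{x_1,\dots,x_n\}$, set $\mathfrak{m}_1:=(x:x\in G(P_1))\subset K[G(P_1)]$, and put $\tau:=r_2+\dots+r_{k-1}=n-r_1-r_k$. First I would isolate the ``group-$1$ slice'': the $\ZZ^n$-graded $K$-subspace $N\subseteq S/I$ spanned by all monomials whose support is a nonempty subset of $G(P_1)$. Every such monomial misses $G(P_2)$, hence does not lie in $I=\bigcap P_i$, so $N$ is isomorphic, as a graded vector space, to $\mathfrak{m}_1$.

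Next I would restrict the given decomposition to $N$. For each Stanley space $u_iK[Z_i]$ the intersection $u_iK[Z_i]\cap N$ is nonempty only when $\supp(u_i)\subseteq G(P_1)$, and in that case it equals $u_iK[Z_i\cap G(P_1)]$, since every variable outside $G(P_1)$ is forced to exponent $0$. The unit $1\in S/I$ lies in $u_1K[Z_1]$ with $u_1=1$ and, by hypothesis, $Z_1\subseteq\{x_{r_1+1},\dots,x_n\}$, so $Z_1\cap G(P_1)=\emptyset$; thus the unique space with $u_i=1$ contributes nothing to $N$. Consequently the nonempty restrictions are genuine Stanley spaces $u_iK[Z_i\cap G(P_1)]$ with $u_i$ a nonconstant monomial in $K[G(P_1)]$, and together they form a Stanley decomposition $\mathcal{D}'$ of $\mathfrak{m}_1$. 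By Lemma \ref{samedegree} (applied with $d=1$, so $A=r_1$ and $B=\binom{r_1}{2}$) we have $\sdepth(\mathfrak{m}_1)\le\lceil r_1/2\rceil$, hence $\sdepth(\mathcal{D}')\le\lceil r_1/2\rceil$, and there is an index $i^\ast$ with $|Z_{i^\ast}\cap G(P_1)|\le\lceil r_1/2\rceil$.

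It remains to control the variables of $Z_{i^\ast}$ lying outside $G(P_1)$. Since $u_{i^\ast}K[Z_{i^\ast}]$ meets $N$, it contains a monomial whose support is a nonempty subset of $G(P_1)$, so $W:=\supp(u_{i^\ast})\cup Z_{i^\ast}$ meets $G(P_1)$. As $u_{i^\ast}K[Z_{i^\ast}]$ is a free Stanley space in $S/I$, the set $W$ cannot meet all of $G(P_1),\dots,G(P_k)$ at once (otherwise $u_{i^\ast}\prod_{z\in Z_{i^\ast}}z$ would lie in $I$); hence $W$ misses some $G(P_{j_0})$ with $j_0\neq1$. Then $Z_{i^\ast}\setminus G(P_1)\subseteq\bigcup_{2\le j\le k,\,j\neq j_0}G(P_j)$, of cardinality $(n-r_1)-r_{j_0}$. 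Because $j_0\ge2$ and $r_2\ge r_3\ge\dots\ge r_k$, we have $r_{j_0}\ge r_k$, so $|Z_{i^\ast}\setminus G(P_1)|\le(n-r_1)-r_k=\tau$. Combining the two estimates gives $|Z_{i^\ast}|\le\lceil r_1/2\rceil+\tau$, whence $\sdepth(\mathcal{D})=\min_i|Z_i|\le|Z_{i^\ast}|\le\lceil r_1/2\rceil+r_2+\dots+r_{k-1}$.

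The step I expect to be the main obstacle is making the restriction rigorous: one must verify that intersecting each $u_iK[Z_i]$ with the slice $N$ really produces a Stanley space over the variable set $Z_i\cap G(P_1)$, so that $\mathcal{D}'$ is an honest Stanley decomposition of $\mathfrak{m}_1$ and not a mere vector-space splitting, and that the unit space is the only one with $u_i=1$ so that no surviving ``$1\cdot K[\cdot]$'' piece spoils the bound. The role of the two hypotheses then becomes transparent: the normalization $u_1=1$, $Z_1\subseteq\{x_{r_1+1},\dots,x_n\}$ removes the unit from the slice, while the ordering $r_2\ge\dots\ge r_k$ guarantees that deleting the missed group $G(P_{j_0})$ always costs at least $r_k$, leaving at most $\tau$ variables outside $G(P_1)$.
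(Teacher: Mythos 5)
Your proof is correct and follows essentially the same route as the paper's: you restrict the given decomposition to the slice isomorphic to the maximal ideal $\mathfrak{m}_1$ of $K[G(P_1)]$ (the paper's $J=P_1\cap K[x_1,\dots,x_{r_1}]$, handled via the map $\psi$), extract an index $i^\ast$ with $|Z_{i^\ast}\cap G(P_1)|\leq\lceil r_1/2\rceil$, and then use the fact that $\supp(u_{i^\ast})\cup Z_{i^\ast}$ must miss some $G(P_{j_0})$ with $j_0\neq 1$ together with the ordering $r_2\geq\dots\geq r_k$. The only cosmetic difference is that you derive $\sdepth(\mathfrak{m}_1)\leq\lceil r_1/2\rceil$ from Lemma \ref{samedegree} with $d=1$ rather than citing \cite{B} as the paper does, which is a valid (and self-contained) substitution.
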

\begin{proof}
We can assume that $P_1=(x_1,\dots,x_{r_1})$ and $$\psi:P_1\cap K[x_{1},\dots,x_{r_1}]\hookrightarrow S/{I}$$ be the inclusion given by $$K[x_{1},\dots,x_{r_1}]\hookrightarrow S/{I}.$$ Then $P_1\cap K[x_1,\dots,x_{r_1}]=\bigoplus\limits_i\psi^{-1}(u_iK[Z_i])$. If $\psi^{-1}(u_iK[Z_i])\neq 0$ implies there exists $u_if\in u_iK[Z_i]$ with $u_if\in P_1\cap K[x_{1},\dots,x_{r_1}]$, but then $u_i\neq1$ and so $u_i\in P_1\cap K[x_{1},\dots,x_{r_1}]$. Let $Z_i'=Z_i\cap \{x_{1},\dots,x_{r_1}\}$. Then $\psi^{-1}(u_iK[Z_i])=u_iK[Z_i']$ and we get a Stanley decomposition of $J:=P_1\cap K[x_{1},\dots,x_{r_1}]$. Since $\sdepth(J)=\lceil\frac{r_1}{2}\rceil$ by \cite{B} it follows that $$|Z_i'|\leq \sdepth(J)=\lceil\frac{r_1}{2}\rceil.$$ But $Z_i$ cannot have variables from all $G(P_j)$ and we get $Z_i\subset \bigcup\limits_{1=e\neq j}^kG(P_e)$ for some $j\neq 1$. Therefore $$Z_i\subset \{Z_i'\cup G(P_{2})\cup G(P_{3})\cup \cdots \cup G(P_{k})\}\backslash G(P_j)\text{ for some $j\neq 1$}.$$ Thus we have $$\sdepth(\mathcal{D})\leq \lceil\frac{r_1}{2}\rceil+r_2+r_3+\dots+r_{j-1}+r_{j+1}+\dots+r_{k}\leq \lceil\frac{r_1}{2}\rceil+r_2+r_3+\cdots +r_{k-1}.$$
\end{proof}
\begin{Proposition}\label{prop}
Suppose that $r_1\geq r_2\geq\dots \geq r_k$, $k\geq 3$. Then $$\sdepth(S/I)\leq \lceil\frac{r_{k-1}}{2}\rceil+r_1+r_2+\dots+r_{k-2}.$$
\end{Proposition}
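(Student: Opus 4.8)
The plan is to prove the estimate for an arbitrary Stanley decomposition $\mathcal{D}$ of $S/I$ and then pass to the maximum. First I would locate the Stanley space carrying the residue class of $1$. As noted in the discussion preceding Lemma \ref{ub}, this space has the form $K[Z_1]$ with $Z_1$ avoiding $G(P_j)$ for \emph{some} index $j$: a monomial involving a variable from every $G(P_i)$ would lie in $I=P_1\cap\cdots\cap P_k$, and hence could not sit inside a free Stanley space attached to $1$. Thus to each $\mathcal{D}$ there is attached at least one prime $P_j$ whose variable set is missed by the leading space, and this $j$ is not under our control.

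Next I would apply Lemma \ref{ub} with $P_j$ playing the role of the distinguished prime $P_1$ of that lemma, relabeling the remaining $k-1$ heights in decreasing order. This is legitimate because every $r_i\geq 1$ (each $P_i\neq(0)$) and $k\geq 3$ ensures that at least one further height survives the ``drop the smallest'' step inside Lemma \ref{ub}. The lemma then gives
$$\sdepth(\mathcal{D}) \leq \Big\lceil\frac{r_j}{2}\Big\rceil + \sum_{i\neq j} r_i - \min_{i\neq j} r_i.$$
It remains to check that this right-hand side is, for every admissible $j$, bounded above by $\lceil r_{k-1}/2\rceil + r_1+\cdots+r_{k-2}$. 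Since $\sum_{i=1}^k r_i=n$, I would rewrite the bound as $n-r_k-\lfloor r_j/2\rfloor$ when $j\neq k$ and as $n-r_{k-1}-\lfloor r_k/2\rfloor$ when $j=k$, and compare each with the target value $n-r_k-\lfloor r_{k-1}/2\rfloor$. For $j\leq k-1$ one uses $\lfloor r_j/2\rfloor\geq\lfloor r_{k-1}/2\rfloor$, and for $j=k$ one uses $\lceil r_{k-1}/2\rceil\geq\lceil r_k/2\rceil$; both follow at once from $r_1\geq\cdots\geq r_k$. Since the estimate $\sdepth(\mathcal{D})\leq\lceil r_{k-1}/2\rceil + r_1+\cdots+r_{k-2}$ now holds for every Stanley decomposition $\mathcal{D}$, taking the maximum over all of them yields the assertion.

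The main obstacle is conceptual rather than computational: because we cannot prescribe which prime is avoided by the leading Stanley space, the bound must be taken in the worst case over $j$. The heart of the matter is that this worst case occurs precisely when the avoided prime has the \emph{second-smallest} height $r_{k-1}$ — equivalently, when the prime of smallest height $r_k$ is the one discarded by Lemma \ref{ub} — and this is exactly what produces the stated right-hand side. Everything beyond this observation is a direct invocation of Lemma \ref{ub} together with the elementary ceiling/floor monotonicity inequalities above.
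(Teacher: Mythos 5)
Your proof is correct and takes essentially the same route as the paper's: pick a Stanley decomposition, note that the Stanley space containing $1$ must avoid $G(P_j)$ for some $j$, apply Lemma \ref{ub} with $P_j$ as the distinguished prime, and check that every admissible $j$ yields a bound at most $\lceil\frac{r_{k-1}}{2}\rceil+r_1+\cdots+r_{k-2}$. Your explicit worst-case verification over $j$ (via $\lfloor\frac{r_j}{2}\rfloor\geq\lfloor\frac{r_{k-1}}{2}\rfloor$ for $j\leq k-1$ and $\lceil\frac{r_{k-1}}{2}\rceil\geq\lceil\frac{r_k}{2}\rceil$ for $j=k$) carefully fills in what the paper compresses into the phrase ``which is enough.''
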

\begin{proof}
Let $\mathcal{D}$ be a Stanley decomposition of $S/I$, $S/I=\bigoplus\limits_{i=1}^pu_iK[Z_i]$ such that $\sdepth(\mathcal{D})=\sdepth(S/I)$. Then there exists $1\leq m\leq k$ such that $Z_1\subset \bigcup\limits_{1=e\neq m }^kG(P_e)$. By the above lemma $\sdepth(\mathcal{D})\leq \sum\limits_{1=i\neq m}^{k-1}r_i+\lceil\frac{r_m}{2}\rceil$, which is enough.
\end{proof}
\begin{Corollary}
Let $I$ be the edge ideal of a complete $k$-partite hypergraph $\mathbf{H}_d^k$. Then $$(k-1)\lceil\frac{d}{2}\rceil\leq\sdepth(S/I)\leq (k-2)d+\lceil\frac{d}{2}\rceil.$$
\end{Corollary}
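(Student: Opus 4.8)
The plan is to recognize this corollary as a direct specialization of the two earlier structural results to the symmetric situation in which every vertex class has the same size. Recall that the edge ideal of $\mathbf{H}_d^k$ is $I=\bigcap_{i=1}^k P_i$, where $P_i=(v_1^{(i)},\dots,v_d^{(i)})$ is the monomial prime ideal generated by the $d$ variables attached to the vertex class $V^{(i)}$. These generator sets are pairwise disjoint, $\sum_{i=1}^k P_i=\mathfrak{m}$, and in particular $r_i:=\height(P_i)=d$ for every $i$ while $n=kd$. Thus the hypotheses of both Corollary \ref{clc} and Proposition \ref{prop} are met, and the only task is to substitute $r_1=\dots=r_k=d$ into the formulas already established.

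For the lower bound I would simply invoke Corollary \ref{clc}, which is itself the specialization of Corollary \ref{clb} to the case $r_1=\dots=r_k=d$ and gives directly
$$\sdepth(S/I)\geq (k-1)\Big\lceil\tfrac{d}{2}\Big\rceil.$$
No further work is needed here.

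For the upper bound I would apply Proposition \ref{prop}. Since all the heights coincide, the ordering hypothesis $r_1\geq r_2\geq\dots\geq r_k$ holds trivially, and for $k\geq 3$ the proposition yields
$$\sdepth(S/I)\leq \Big\lceil\tfrac{r_{k-1}}{2}\Big\rceil+r_1+\dots+r_{k-2}=\Big\lceil\tfrac{d}{2}\Big\rceil+(k-2)d,$$
which is exactly the claimed right-hand side. The only boundary point requiring separate attention is $k=2$, for which Proposition \ref{prop} does not apply; but there the asserted interval collapses to the single value $\lceil d/2\rceil$, which is precisely the exact value of $\sdepth(S/I)$ established in \cite{DQ} and \cite{BK}, so the corollary holds in that case as well.

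I do not expect any genuine obstacle in this proof: all the substantive combinatorial and homological work was carried out in proving Corollary \ref{clb}/\ref{clc} and Proposition \ref{prop}. The mildest point of care is verifying that the defining data of $\mathbf{H}_d^k$ really do satisfy the disjointness and full-support hypotheses (they do, by construction) and remembering to dispose of the $k=2$ case via the known exact value rather than via Proposition \ref{prop}.
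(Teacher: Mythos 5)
Your proposal is correct and matches the paper's (implicit) derivation exactly: the lower bound is Corollary \ref{clc} verbatim, and the upper bound is Proposition \ref{prop} specialized to $r_1=\dots=r_k=d$, giving $\lceil d/2\rceil+(k-2)d$. Your separate treatment of $k=2$ via the exact value $\sdepth(S/I)=\lceil d/2\rceil$ from \cite{DQ} and \cite{BK} is a sound extra precaution, since Proposition \ref{prop} assumes $k\geq 3$ --- a point the paper itself only addresses in the remark following Corollary \ref{clc}.
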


\end{document}